
\documentclass[reqno,a4paper]{amsart}


\usepackage{amssymb,amsfonts,amsmath,amsthm}
\usepackage{mathrsfs}
\usepackage{graphicx}
\usepackage{color}
\usepackage{verbatim}
\usepackage{epigraph}


\setlength\epigraphwidth{0.625\textwidth}
\setlength\epigraphrule{0pt}


\newtheorem{thm}{Theorem}[section]
\newtheorem{pro}[thm]{Proposition}
\newtheorem{lem}[thm]{Lemma}
\newtheorem{cor}[thm]{Corollary}

\theoremstyle{definition}

\newtheorem{exa}[thm]{Example}

\theoremstyle{remark}
\newtheorem{rmk}[thm]{Remark}

\numberwithin{equation}{section}


\def\J{\mathscr{J}}
\def\D{\mathscr{D}}
\def\R{\mathscr{R}}
\def\L{\mathscr{L}}
\def\H{\mathscr{H}}
\def\cE{\mathcal{E}}
\def\T{\mathcal{T}}
\def\eps{\varepsilon}
\def\es{\varnothing}
\def\Ra{\Rightarrow}

\def\ol#1{\overline{#1}}

\def\ig#1{\mathsf{IG}(#1)}

\def\gen#1{\langle #1 \rangle}
\def\pre#1#2{\langle #1 \; | \; #2 \rangle}

 \DeclareMathOperator\id{id}
\DeclareMathOperator\im{im} 
 \DeclareMathOperator\rank{rank}
\DeclareMathOperator\Stab{Stab} \DeclareMathOperator\AHom{AHom}

\begin{document}


\title[Free idempotent-generated semigroups]%
{Elaborating the word problem for \\ free idempotent-generated semigroups \\ over the full transformation monoid} 


\author{IGOR DOLINKA}

\address{Department of Mathematics and Informatics, University of Novi Sad, Trg Dositeja Obradovi\'ca 4,
21101 Novi Sad, Serbia}

\email{dockie@dmi.uns.ac.rs}

\thanks{This research was supported by the Ministry of Education, Science, and Technological Development of the Republic of Serbia.}


\subjclass[2010]{Primary 20M05; Secondary 20B25, 20F10, 20M20}


\keywords{Free idempotent-generated semigroup; Biordered set; Word problem; Full transformation monoid}




\begin{abstract}
With each semigroup one can associate a partial algebra, called the biordered set, which captures important algebraic and 
geometric features of the structure of idempotents of that semigroup. For a biordered set $\cE$, one can
construct the free idempotent-generated semigroup over $\cE$, $\ig{\cE}$, which is the free-est semigroup (in a definite
categorical sense) whose biorder of idempotents is isomorphic to $\cE$. Studies of these intriguing objects have been
recently focusing on their particular aspects, such as maximal subgroups, the word problem, etc. In 2012, Gray and Ru\v skuc
pointed out that a more detailed investigation into the structure of the free idempotent-generated semigroup over the biorder 
of $\T_n$, the full transformation monoid over an $n$-element set, might be worth pursuing. In 2019, together with Gould and
Yang, the present author showed that the word problem for $\ig{\cE_{\T_n}}$ is algorithmically soluble. In a recent work 
by the author, it was showed that, for a wide class of biorders $\cE$, the algorithmic solution of the word problem revolves 
around the so-called vertex groups, which arise as certain subgroups of direct products of pairs of maximal subgroups
of $\ig{\cE}$. In this paper we determine these vertex groups for the case when $\cE$ is the biorder of idempotents of $\T_n$.
\end{abstract}


\maketitle

\epigraph{\textit{Voici mon secret. Il est tr\`es simple: on ne voit bien qu'avec le c{\oe}ur. L'essentiel est invisible pour les
yeux.} [...] \textit{C'est le temps que tu as perdu pour ta rose qui fait ta rose si importante.}}{--- A. de Saint-Exup\'ery, 
\textit{Le petit prince}}


\section{Introduction}\label{sec:intro}

For a semigroup $S$, let $E(S)$ denote the set of its idempotents. However, merely recording the collection of idempotent
elements of a semigroup is frequently not enough, as a significant amount of information about the mutual
relationships of these elements, as well as their impact to the general structure, is lost in this way. Therefore, as it 
turns out, it is useful to consider a partial algebra $\cE_S=(E,\cdot)$, where $E=E(S)$, obtained by retaining products of
idempotents $ef$ such that $\{ef,fe\}\cap\{e,f\}\neq\es$. (This amounts to saying that the product of $e$ and $f$, multiplied
in some order, results in one of the factors. It is then easily verified that the product of $e$ and $f$ in the reverse
order is an idempotent, too, although not necessarily equal to one of $e,f$.) Such a pair $\{e,f\}$ is called a 
\emph{basic pair}.

The partial algebra $\cE_S$ obtained in this way is called the \emph{biordered set} of $S$. The name derives from the fact that
it is convenient to define two quasi-orders on $\cE_S$: namely, let $e\leq_\ell f$ if and only if $ef=e$, and let $e\leq_r f$
if and only if $ef=f$. This effectively captures the basic pairs of $S$; in addition, the intersection $\leq = \leq_\ell \cap
\leq_r$ is precisely the \emph{natural order} of idempotents of $S$ (see e.g.\ \cite{HoBook}). It was shown by Nambooripad \cite{Na} 
and Easdown \cite{Ea2} that these partial structures can be finitely axiomatised: there is a finite set of formul{\ae} such that 
for any abstract structure $\cE$ satisfying these axioms there is a semigroup $S$ such that $\cE\cong\cE_S$. Of course, the 
biordered set of any finite semigroup is finite, while the converse is not necessarily the case: there are finite biorders not 
stemming from any finite semigroup \cite{Ea1}.

Crucial in the study of idempotent-generated semigroups (semigroups $S$ with the property that $S=\gen{E(S)}$), a very natural
and omnipresent class of semigroups, is the notion of a \emph{free idempotent-generated semigroup on a biordered set} $\cE$.
It is defined by the presentation
$$
\ig{\cE} = \pre{\ol{E}}{\ol{e}\ol{f}=\ol{e\cdot f}\text{ whenever }\{e,f\}\text{ is a basic pair in }\cE},
$$
where $\ol{E}=\{\ol{e}:\ e\in E\}$ is an alphabet in a one-to-one correspondence with $E$, the set of elements of the biorder $\cE$.
This is, in a quite definite sense, the ``free-est'' idempotent-generated semigroup with biordered set isomorphic to $\cE$ (in the
case of $\ig{\cE}$, its biorder $\ol{\cE}$ is formed by elements of $\ol{E}$). More precisely, if $S$ is any semigroup such that 
$\cE_S\cong \cE$, with $\phi:\cE\to\cE_S$ being a biordered set isomorphism, then $\phi'=\iota^{-1}\phi:\ol{\cE}\to\cE_S$ (where 
$\iota:\cE\to\ol{\cE}$, defined by $e\iota=\ol{e}$, $e\in E$, is also a biorder isomorphism) can be (uniquely) extended to a semigroup 
homomorphism $\Psi_\phi:\ig{\cE}\to S$ (here the index $\phi$ intends to indicate that the homomorphism $\Psi$ depends on the choice 
of $\phi$; in other cases, when the initial isomorphism $\phi$ is irrelevant, we will just suppress this index). The image of this 
homomorphism is precisely the idempotent-generated part of $S$, namely its subsemigroup $S'=\gen{E(S)}$.

Free idempotent-generated semigroups were introduced by Nambooripad in \cite{Na} within a wider framework of a general study of regular
semigroups (see also \cite{NP,Pa}). Since then, they have been an object of fascination of an array of algebraists. The most recent 
resurgence of interest in this topic was initiated by papers \cite{BMM} and \cite{GR1}. Namely, for some time, a folklore conjecture 
(recorded officially only in \cite{McE}) was in circulation that the maximal subgroups of free idempotent-generated semigroups must 
necessarily be free groups. This conjecture proved to be wrong in a rather strong fashion: first, Brittenham, Margolis, and Meakin 
\cite{BMM} constructed a 73-element semigroup $S$ (containing 37 idempotents) such that $\ig{\cE_S}$ contains a maximal subgroup isomorphic 
to $\mathbb{Z}\times\mathbb{Z}$ (so, not a free group), and then, in a ground-breaking paper \cite{GR1}, Gray and Ru\v skuc showed that for 
\emph{any} group $G$ there is a suitable biorder $\cE$ such that $\ig{\cE}$ contains a maximal subgroup isomorphic to $G$. This was 
followed by a series of papers studying these maximal subgroups, see e.g.\ \cite{YDG1,Do1,DG,DR,GY,GR2}, after which the focus shifted 
to other structural features, and, primarily, to the question of the word problem. The pioneering paper in this sense were \cite{YG,DGR}, 
where the later exhibited the first example of a finite biorder $\cE$ (stemming, by the way, from a finite idempotent semigroup) such that 
all maximal subgroups of $\ig{\cE}$ have decidable word problems (in fact, they were all free or trivial) while the word problem for 
$\ig{\cE}$ is algorithmically unsolvable. The true nature of these problems was revealed in the papers \cite{YDG2} and \cite{Do2}, where 
it was shown that the word problem of $\ig{\cE}$ is in fact equivalent to a specific type of a constraint satisfaction problem related to 
certain subgroups (called the \emph{vertex groups} in \cite{Do2}) of direct products of pairs of maximal subgroups of $\ig{\cE}$. 

The aim of the present paper is to determine these groups for the free idempotent-generated semigroup over $\cE_{\mathcal{T}_n}$, the
biorder of the full transformation monoid $\mathcal{T}_n$ over an $n$-element set. We recall that the maximal subgroups of 
$\ig{\cE_{\mathcal{T}_n}}$ were previously computed in \cite{GR2}: with trivial exceptions, these are symmetric groups. With this
knowledge at hand, it was then shown in \cite{YDG2} that the word problem of $\ig{\cE_{\mathcal{T}_n}}$ is decidable for all finite $n$.
Furthermore, it can be amply seen from \cite{Do2} (see Theorem 3.9 and Theorem 2.4, supplemented by remarks preceding Theorem 3.6) that
in the case when the word problem of $\ig{\cE}$ is algorithmically soluble for a finite biorder $\cE$, the only real obstacle towards 
the goal of routinely writing, say, a GAP code \cite{GAP} implementing this word problem is the knowledge of the corresponding 
vertex groups (or, to be more precise, their specific cosets). It was noted in \cite[Remark 3.7]{Do2} that given a finite biorder $\cE$
there exists an algorithm which outputs a finite generating set for any of the required vertex groups (within the direct product of
corresponding maximal subgroups) as well as the necessary coset representatives. However (as we shall see below), the \emph{brute force}
methods for such computations can be very involved. It is thus the purpose of this paper to bypass such methods by providing
combinatorial analysis and arguments sufficient to get hold of these vertex groups and their coset representatives directly. This
reduces the word problem for $\ig{\cE_{\mathcal{T}_n}}$ and the explicit specification of the corresponding algorithm to a 
sequence of standard computational tasks in finite group theory (see Subsection \ref{subsec:comp} below).

The remainder of the paper has two parts. One aims at making this paper reasonably self-contained, and is devoted to the summary of 
all the main notions and results needed to explain the algorithmic problem where the mentioned vertex groups arise, turning out to be 
equivalent to the word problem of semigroups of the form $\ig{\cE}$. In the other half of the paper, in Proposition \ref{pro:deg} and 
Theorem \ref{thm:main} we determine the vertex groups for $\ig{\cE_{\mathcal{T}_n}}$ (with few exceptions that are irrelevant to 
the word problem). Typically, these groups will be subgroups of direct products of finite symmetric groups; they will be closely related 
to a class of permutations preserving certain nice combinatorial configurations.


\section{Preliminaries}\label{sec:prelim}

\subsection{General background}

Throughout we assume familiarity with the basic notions and techniques of semigroup theory, and for these we refer to \cite{HoBook} as a standard textbook 
in the area. In particular, one of the most elementary tools are \emph{Green's relations}: $\R$ relates elements that generate the same principal right
ideal of a semigroups, and $\L,\J$ are respectively the left and the two-sided analogues; also, we have $\H=\R\cap\L$ and $\D=\R\vee\L=\R\circ\L$ (as
$\R\circ\L=\L\circ\R$ holds). Furthermore, we have $\H\subseteq \R,\L \subseteq \D \subseteq \J$, and, in general, all of these containments might be proper. 
On the other hand, it should be noted that $\D=\J$ holds in many natural examples of semigroups: for example, this is true for all finite (and more generally 
for all periodic) semigroups. Also, as proved in \cite[Theorem 4.2(5)]{Do2}, $\D=\J$ also holds in $\ig{\cE}$ whenever $\cE$ is a finite biorder.

These definitions naturally give rise to partial orders on the sets of $\R$-/$\L$-/$\J$-classes of a semigroup (and thus to quasi-orders on the semigroup
itself). Namely, for two $\R$-classes we may write $R_a\leq R_b$ (or, alternatively, $a\leq_\R b$) if and only if $aS^1\subseteq bS^1$; in a similar fashion, 
one can order the $\L$-classes and the $\J$-classes, too.

\begin{exa}
As our main concern in this paper is with the biorders of $\T_n$, the finite full transformation monoids, here is the description of Green's relations in 
$\T_X$ (which are valid for a non-empty set $X$ of any cardinality), assuming that the functions in $\T_X$ are acting on $X$ from the right and are thus
composed left-to-right:
\begin{itemize}
\item $f\;\R\; g \text{ if and only if } \ker f = \ker g$;
\item $f\;\L\; g \text{ if and only if } \im f = Xf = Xg = \im g$;
\item $f\;\J\; g \text{ if and only if } \rank f = |\im f| = |\im g| = \rank g$.
\end{itemize}
In addition, we always have $\D=\J$ in $\T_X$.
\end{exa}

\begin{exa}\label{exa:Tn}
In $\T_n$, $n\geq 1$, the $\J$-/$\D$-classes form a chain of length $n$, as transformations are classified by their rank; so, it is convenient to denote these
classes by $D_n,D_{n-1},\dots,D_1$. Here, $D_n\cong \mathbb{S}_n$ is the group of units consisting of all permutations (= transformations of rank $n$), while
at the other extreme, $D_1$ consists of all constant mappings (forming a semigroup of right zeros). The principal factor associated with $D_m$, $2\leq m\leq n$,
is isomorphic to the Rees matrix semigroup $\mathcal{M}^0[\mathbb{S}_m;I_m;\Lambda_m;P^{(m)}]$, where $I_m$ is the collection of all partitions of $[1,n]$ 
into $m$ classes, $\Lambda_m$ is the collection of all $m$-element subsets of $[1,n]$, while the entry $p_{P,A}$ of the sandwich matrix $P^{(m)}$ is obtained 
in the following way. If $A\perp P$ (which means that $A\in \Lambda_m$ is a cross-section of $P\in I_m$) then this entry is set to be the \emph{label} 
$\lambda(P,A)\in\mathbb{S}_m$ of the pair $(P,A)$ \cite{GR2}, computed as described below; otherwise, it is 0. As for the permutation $\lambda(P,A)$, 
assume that $P=\{P_1,\dots,P_m\}$ and $A=\{a_1,\dots,a_m\}$, with indexing done in such a way that $\min P_1<\dots<\min P_m$ and $a_1<\dots<a_m$. Now, 
the assumption $A\perp P$ ensures that each $P$-class contains a unique element of $A$: say, for each $1\leq i\leq m$, we have that $a_{r_i}\in P_i$. 
Then the mapping
$$
\left(\begin{array}{cccc}
1 & 2 & \dots & m \\
r_1 & r_2 & \dots & r_m
\end{array}\right)
$$
is a permutation, and this is precisely $\lambda(P,A)$.
\end{exa}

\subsection{Basic structural facts about $\ig{\cE}$}

Let $\cE$ be a biordered set -- arising from a semigroup $S$, so that $\cE\cong\cE_S$ -- and let $\Psi:\ig{\cE}\to S$ be the homomorphism, mentioned in the
introduction, extending the map $\ol{e}\mapsto e$, $e\in E(S)$. There is a great degree of similarity between certain aspects of the structure of 
$\ig{\cE}$ and $S'=\gen{E(S)}$. Here we list some of them (see \cite{GR1} for references corresponding to individual results):
\begin{itemize}
\item For any $e\in E$, $\Psi$ maps the $\D$-class of $\ol{e}$ in $\ig{\cE}$ precisely onto the $\D$-class of $e$ in $S'$; it is in this sense that we say 
that the regular $\D$-classes in $\ig{\cE}$ and $S'$ are in bijective correspondence and refer to \emph{corresponding} regular $\D$-classes (in $\ig{\cE}$ and 
$S'$, respectively).
\item $\Psi$ maps the $\R$-class of $\ol{e}$ onto the $\R$-class of $e$, the $\L$-class of $\ol{e}$ onto the $\L$-class of $e$.
\item Consequently, the restriction of $\Psi$ to $H_{\ol{e}}$, the maximal subgroup of $\ig{\cE}$ containing the idempotent $\ol{e}$, is a surjective group 
homomorphism onto $H_e$, the maximal subgroup of $S'$ containing $e$. In other words, the maximal subgroup in a regular $\D$-class of $\ig{\cE}$ is a 
pre-image of the maximal subgroup in the corresponding regular $\D$-class of $S'$.
\end{itemize}
The most fundamental result of the seminal paper \cite{GR1} provides a presentation for these maximal subgroups $H_{\ol{e}}$ (based on the structural data 
about $S'$ as input). This presentation if defined on the set of generators $\{f_{i\lambda}:\ (i,\lambda)\in \mathcal{K}\subseteq I\times\Lambda\}$, where 
$I,\Lambda$ are index sets for the collections of $\R$-/$\L$-classes within $D_{\ol{e}}$ (or within $D_e$ in $S'$, which is the same, as just explained), 
and $\mathcal{K}$ is the set of all pairs with the property that the $\H$-class $H_{i\lambda}=R_i\cap L_\lambda$ is a group, i.e.\ that it contains an 
idempotent (again, it is irrelevant whether we are looking at this within $\ig{\cE}$ or $S'$). As shown in \cite[Theorem 3.10]{DGR}, there is an algorithm 
which, given a finite biordered set $\cE$, computes a (finite) presentation for the maximal subgroup $H_{\ol{e}}$ of $\ig{\cE}$.

\subsection{Regular elements in $\ig{\cE}$ and $\D$-fingerprints}

Since $\ig{\cE}$ is defined in terms of a presentation over a generating set $\ol{E}$, every element of $\ig{\cE}$ can be represented by a word from $E^+$,
in the sense of the natural (surjective) homomorphisms $E^+\to\ig{\cE}$ extending the map $e\mapsto\ol{e}$, $e\in E$. So, for every element of $\ig{\cE}$,
there is at least one word over the alphabet $E$ representing it. The problem is -- and this gives rise to the \emph{word problem} -- this representation
is not necessarily unique: there might be multiple ways to represent an element of $\ig{\cE}$. Thus the word problem (in this case for $\ig{\cE}$) 
asks: is there an algorithm which, presented with two words from $E^+$, decides whether they represent the same element of $\ig{\cE}$? Of course, all along 
the way we assume that $\cE$ is a finite biorder.

Also, a relevant algorithmic question is the following one: given a word $\mathbf{w}=e_1\dots e_m$, decide if it represents a \emph{regular} element of $\ig{\cE}$.
A regularity criterion is found in \cite{DGR}, where in Theorem 3.6 it was proved that $\ol{\mathbf{w}}\in\ig{\cE}$ is regular if and only if $\mathbf{w}$ 
contains a letter $e$ (called the \emph{seed}) so that with the corresponding factorisation $\mathbf{w}=\mathbf{u}e\mathbf{v}$ we have $\ol{\mathbf{u}e}\,
\L\,\ol{e}\,\R\,\ol{e\mathbf{v}}$, in which case $\ol{e}\,\D\,\ol{\mathbf{w}}$. In a certain sense, a sort of a converse statement is true as well: whenever 
we have $\mathbf{w}\equiv \mathbf{u}e\mathbf{v}$ such that $\ol{e}\,\D\,\ol{\mathbf{w}}$, then $e$ is necessarily a seed for $\mathbf{w}$, with 
$\ol{\mathbf{w}}$ being a regular element of $\ig{\cE}$, so that $\ol{\mathbf{u}e}\,\L\,\ol{e}\,\R\,\ol{e\mathbf{v}}$. Furthermore, it was then argued in
Theorem 3.7. of the same paper that this criterion can be effectively tested, so that there is an algorithm which establishes regularity of elements
represented by given words.

For words representing regular elements of $\ig{\cE}$, seeds are not necessarily unique. In fact, the main result of \cite{FG} shows not only that 
it might happen that every letter is a seed, but that in fact whenever $\mathbf{w}=e_1\dots e_m$ represents a regular element, then there are $e_1',\dots,e_m'\in E$ 
such that we have $\ol{e_i'}\in D_{\ol{\mathbf{w}}}$ for all $1\leq i\leq m$, and $\ol{\mathbf{w}} = \ol{e_1'\dots e_m'}$ holds in $\ig{\cE}$. In other words,
any word representing a regular element of $\ig{\cE}$ can be rewritten in terms of idempotents all of which belong to the same $\D$-class as the regular
element itself.

Now let $\ol{\mathbf{w}}$ be a regular element. We have already mentioned that $\D=\J$ holds whenever $\cE$ is finite; hence, we have $D_{\ol{\mathbf{w}}}=
J_{\ol{\mathbf{w}}}$, and the principal factor arising from this $\J$-class must be a completely $0$-simple semigroup, as only finitely many idempotents are involved.
So, we can identify this principal factor with $\mathcal{M}^0[G;I,\Lambda;P]$, where $G$ is the maximal subgroup of $\ig{\cE}$ in the regular $\D$-class $D_{\ol{\mathbf{w}}}$
-- a presentation of which is given e.g.\ in \cite[Theorem 4.2]{DGR}, based on the data from an idempotent-generated semigroup $S'$ such that $\cE\cong\cE_{S'}$ --
$I,\Lambda$ are index sets for the corresponding principal factor ($\D$-class) of $S'$, and $P=[f_{i\lambda}^{-1}]_{I\times\Lambda}$ (see \cite{GR1}). Consequently,
it is possible to write
$$
\ol{\mathbf{w}} = (i,g,\lambda)
$$
for some $g\in G$ (written as a word over $f_{i\lambda}$'s) and $i\in I$, $\lambda\in\Lambda$. Furthermore, as shown in \cite[Theorem 4.3]{YDG2}
(and noted in the subsequent Remark 4.4), there is an algorithm which, presented with a finite biorder $\cE$ and a word $\mathbf{w}\in E^+$,
computes $i,\lambda$, and a word representing $g$.

However, in general, a word $\mathbf{u}$ need not to represent a regular element of $\ig{\cE}$. Yet, what we can do in this case is to consider the coarsest 
factorisation $\mathbf{u}\equiv \mathbf{u}_1\dots\mathbf{u}_k$ into subwords such that each factor represents a regular element; that is to say that whenever 
$\ol{\mathbf{u}_i\dots\mathbf{u}_j}$ is regular for some $1\leq i\leq j\leq k$ then necessarily $i=j$. Such a factorisation is in \cite{YDG2} called a 
\emph{minimal r-factorisation}, and it was explained in \cite[Section 3]{YDG2} that, given $\mathbf{u}$, one can always effectively find one. Of course, 
minimal r-factorisations need not to be unique -- there can be others, for the same word $\mathbf{u}$. Furthermore, there might be another word $\mathbf{v}$ 
such that $\ol{\mathbf{u}}=\ol{\mathbf{v}}$ holds in $\ig{\cE}$, and this word might have a host of its own minimal r-factorisations. Nevertheless, a 
striking result was proved in \cite[Theorem 3.4]{YDG2}: if $\mathbf{u},\mathbf{v}\in E^+$ are two words such that $\ol{\mathbf{u}} = \ol{\mathbf{v}}$, 
with minimal r-factorisations $\mathbf{u}=\mathbf{u}_1\dots\mathbf{u}_k$ and $\mathbf{v}\equiv \mathbf{v}_1\dots \mathbf{v}_r$, then necessarily $k=r$ 
and for all $1\leq i\leq k$ we have $\ol{\mathbf{u}_i}\,\D\,\ol{\mathbf{v}_i}$ (in fact, we even have $\ol{\mathbf{u}_1}\,\R\,\ol{\mathbf{v}_1}$ and 
$\ol{\mathbf{u}_k}\,\L\,\ol{\mathbf{v}_k}$). So, in other words, there is a sequence $(D_1,\dots,D_k)$ of regular $\D$-classes of $\ig{\cE}$ which is 
an invariant of an element of $\ig{\cE}$: no matter what word we consider that represents the element in question, and no matter what minimal r-factorisation 
of that word we take, the (regular) elements represented by the factors will, in the given order, belong to these regular $\D$-classes. Later on, in 
\cite[Theorem 4.2(4)]{Do2}, it was proved that the assumption $\ol{\mathbf{u}}\,\D\,\ol{\mathbf{v}}$ already suffices to arrive at the same conclusion.

As already explained above, given a word representing a regular element, there is an algorithmic procedure of transforming it into a triple of the form 
$(i,g,\lambda)$. Thus if we have a general word $\mathbf{w}\in E^+$ representing an element with $\D$-fingerprint $(D_1,\dots,D_m)$, there is a routine way 
to write up this element as a product
$$
\ol{\mathbf{w}} = (i_1,g_1,\lambda_1)\dots(i_m,g_m,\lambda_m),
$$
where $(i_s,g_s,\lambda_s)\in D_s$ for all $1\leq s\leq m$. Hence, solving the word problem in $\ig{\cE}$ (and, more generally, sorting out its basic structure) 
essentially comes down to comparing products of the above form and, in particular, finding a way to establish whether they are equal in $\ig{\cE}$. In the 
following subsection we are going to introduce the main technical vehicle to express succinctly the gist of the word problem for $\ig{\cE}$. This vehicle is 
also useful in characterising the main structural properties of $\ig{\cE}$, such as its Green's relations.

\subsection{Contact graphs, vertex groups, the map $\theta$}\label{subsec:theta}

First of all, let us note that in this subsection and in the remainder of this section the definition of the map $\theta$, as well as the formulation of all
the relevant results, are slightly modified with respect to the original ones (as they appeared in \cite{YDG2,Do2}). This is done in order to avoid the notion 
of dual groups and thus to contribute slightly to the ``aesthetic appeal'' of the approach. However, it is but an easy exercise to see that the two approaches
are completely equivalent.

Let us start with the following setup. Assume we have given a sequence of groups $G_1,\dots,G_m$, $m\geq 2$. Furthermore, assume that for $1\leq k<m$ we have 
given relations
$$
\rho_k \subseteq G_k \times G_{k+1},
$$
as well as two sequences of elements $a_k,b_k\in G_k$, $1\leq k\leq m$. From these data, we define a new relation $\rho\subseteq G_1\times G_m$ 
by setting that $(g,h)\in\rho$ if and only if there exist $x_r\in G_r$, $2\leq r\leq m$, such that
\begin{align*}
(a_1^{-1}gb_1,x_2) &\in \rho_1, \\
(a_2^{-1}x_2b_2,x_3) &\in \rho_2, \\
&\vdots \\
(a_{m-1}^{-1}x_{m-1}b_{m-1},x_m) &\in \rho_{m-1}, \\
a_m^{-1}x_mb_m &= h.
\end{align*}
This is the general setting we are going to use to describe the map $\theta$ associated with a relation obtained in this way from two elements of $\ig{\cE}$ 
of a given $\D$-fingerprint, maximal subgroups of $\D$-classes involved, and very specific relations obtained from group-labelled graphs we are about to describe. 
(By a map associated with a relation $\rho\subseteq X\times Y$ we mean a function $\varphi_\rho:X\to\mathcal{P}(Y)$ defined by $y\in x\varphi_\rho$ if and only if 
$(x,y)\in \rho$. This is then easily extended to a function $\mathcal{P}(X)\to\mathcal{P}(Y)$ by $A\varphi_\rho=\bigcup_{x\in A}x\varphi_\rho$.)

Assume now that the (finite) biorder $\cE$ comes from an idempotent-generated semigroup $S'$, so that (up to isomorphism) $\cE=\cE_{S'}$. Let $D_1,D_2$ be two 
(not necessarily distinct) regular $\D$-classes of $S'$, whose $\R$-/$\L$-classes are indexed by sets $I_1,I_2$ and $\Lambda_1,\Lambda_2$, respectively. We are 
going to define a graph $\mathcal{A}(D_1,D_2)$ on the vertex set $\Lambda_1\times I_2$ whose edges are labelled by elements of the group $G_1\times G_2$, where $G_1,G_2$ 
are the maximal subgroups of $\ig{\cE}$ in its $\D$-classes corresponding to $D_1$ and $D_2$, respectively. This is going to be the \emph{contact graph} of $D_1$
and $D_2$.

To define this graph, a crucial observation is that elements of $\ol{E}$, the idempotents of $\ig{\cE}$, exercise left and right actions by partial transformations 
on index sets $I$ and $\Lambda$, respectively, of $\R$-/$\L$-classes of a regular $\D$-class $D$ of $\ig{\cE}$ (and thus of a corresponding $\D$-class of $S'$). 
Namely, for $i,i'\in I$ we set $\ol{e}\cdot i=i'$ if 
$$
\ol{e}(i,g,\lambda) = (i',g',\lambda')
$$
holds in $\ig{\cE}$ for some $g,g'\in G$, where $G$ is a maximal subgroup contained in $D$ (with generators $f_{i\lambda}$, as described before), and some 
$\lambda,\lambda'\in\Lambda$. As it transpires from \cite[Proposition 4.1]{YDG2}, we then necessarily have $\lambda'=\lambda$ (and the above relation will hold 
whenever $\lambda$ is replaced by any other index from $\Lambda$), and $g'=cg$, where the coefficient $c$ depends solely on $\ol{e}$ and $i$ (but not on $g$ or 
$\lambda$). Similarly, we set $\lambda\cdot\ol{e} = \lambda'$ if 
$$
(i,g,\lambda)\ol{e} = (i,gd,\lambda')
$$
for some (or all) $i\in I$, and some $g,d\in G$ (where again $d$ depends only on $\ol{e}$ and $\lambda$). Clearly, these actions are vacuous (i.e.\ correspond to
empty partial maps) unless $\ol{e}$ comes from a $\D$-class that is $\J$-above $D$. Also, \cite[Proposition 4.1]{YDG2} shows that the two partial maps induced by 
a given idempotent $\ol{e}$ are simultaneously empty or non-empty, and so the non-emptiness of one of them implies the existence of fixed points of the other one,
and vice versa. The same result supplies the exact information about the coefficients $c,d$; namely,
$$
c = f_{i'\lambda_0}f_{i\lambda_0}^{-1},
$$
where $\lambda_0$ is any fixed point of the right action of $\ol{e}$ upon $\Lambda$, $\lambda_0\cdot\ol{e}=\lambda_0$, such that both $H_{i\lambda_0}$ and $H_{i'\lambda_0}$
are groups, i.e.\ contain idempotents $\ol{e_{i\lambda_0}}$ and $\ol{e_{i'\lambda_0}}$, respectively. Two remarks are important here: the existence of such $\lambda_0$ is
guaranteed, by \cite[Proposition 2.2]{DGR}, by the mere existence of fixed points of the right action of $\ol{e}$ upon $\Lambda$ (and, in fact, by the non-emptiness of 
that action); and secondly, the choice of such $\lambda_0$ is irrelevant because if $\mu_0$ is another such fixed point then the presentation of $G$ contains a relation 
of the form  $f_{i'\lambda_0}f_{i\lambda_0}^{-1}=f_{i'\mu_0}f_{i\mu_0}^{-1}$ arising from an \emph{up-down singular square} $(i,i';\lambda_0,\mu_0)$, see \cite{GR1}. 
Analogously,
$$
d=f_{i_0\lambda}^{-1}f_{i_0\lambda'},
$$
where $i_0$ is any fixed point of the left action of $\ol{e}$ upon $I$ such that both $H_{i_0\lambda}$ and $H_{i_0\lambda'}$ contain idempotents.

Returning to the definition of the contact graph $\mathcal{A}(D_1,D_2)$, for any idempotent $\ol{e}\in\ol{E}$ such that $\lambda = \mu\cdot\ol{e}$ and $\ol{e}\cdot i=j$ 
we draw an edge 
$$(\lambda,i)\longrightarrow(\mu,j),$$ 
and label it with $(a,b^{-1})\in G_1\times G_2$, where, assuming that the generators of $G_1$ are written as $f_{i\lambda}^{(1)}$ and the generators of $G_2$ as 
$f_{i\lambda}^{(2)}$,
\begin{equation}\label{eq:a}
a = \left(f_{i_0\lambda}^{(1)}\right)^{-1}f_{i_0\mu}^{(1)}
\end{equation}
for any fixed point $i_0$ of the left action of $\ol{e}$ on $I$ such that both $H_{i_0\lambda}$ and $H_{i_0\lambda'}$ contain idempotents, and
\begin{equation}\label{eq:b}
b=f_{j\lambda_0}^{(2)}\left(f_{i\lambda_0}^{(2)}\right)^{-1}
\end{equation}
for any fixed point $\lambda_0$ of the right action of $\ol{e}$ on $\Lambda$ such that both $H_{i\lambda_0}$ and $H_{i'\lambda_0}$ contain idempotents. In fact, 
this edge can be traversed in the opposite direction, too, with the amendment that its label is then considered to be $(a^{-1},b) = (a,b^{-1})^{-1}$. As is usually 
the case, the label of a walk is the product of labels of edges along that walk. For $(\lambda,i)\in \Lambda_1\times I_2$ we denote by $W_{(\lambda,i)}$ the collection 
of labels of all closed walks based at the vertex $(\lambda,i)$. As noted in \cite[Lemma 3.2]{Do2}, this collection is actually a subgroup of $G_1\times G_2$, and we call it 
the \emph{vertex group} at $(\lambda,i)$.

Now, let us fix a $\D$-fingerprint $(D_1,\dots,D_m)$ in $\ig{\cE}$, and let 
$$
\mathbf{x} = (i_1,a_1,\lambda_1)\dots(i_m,a_m,\lambda_m)
$$
and
$$
\mathbf{y} = (j_1,b_1,\mu_1)\dots(j_m,b_m,\mu_m)
$$
be two elements of $\ig{\cE}$ of this $\D$-fingerprint. For $1\leq k<m$, put
$$
\rho_k = \left\{\begin{array}{ll}
W_{(\lambda_k,i_{k+1})}(g_k,h_k) & \text{if there exists a walk }(\lambda_k,i_{k+1})\leadsto (\mu_k,j_{k+1}), \\
\es & \text{otherwise}.
\end{array}\right.
$$ 
where $W_{(\lambda_k,i_{k+1})}$ is the vertex group of $\mathcal{A}(D_k,D_{k+1})$ at $(\lambda_k,i_{k+1})$, and $(g_k,h_k)$ is the label of any walk $(\lambda_k,i_{k+1})
\leadsto (\mu_k,j_{k+1})$ (it is immaterial which walk we take, for if $(g_k',h_k')$ is the label of another such walk then $(g_k,h_k)(g_k',h_k')^{-1}$ is the label of 
a closed walk based at $(\lambda_k,i_{k+1})$ and so it belongs to $W_{(\lambda_k,i_{k+1})}$, thus yielding the same right coset). As described previously, these parameters 
define a relation between $G_1$ and $G_m$, and the corresponding mapping $\mathcal{P}(G_1)\to\mathcal{P}(G_m)$ is denoted by $(\cdot,\mathbf{x},\mathbf{y})\theta$.

\subsection{Putting it all together}

In this (modified) setting just described, Theorem 3.9 of \cite{Do2}, characterising the word problem of $\ig{\cE}$, reads as follows.

\begin{thm}\label{thm:wp}
$\mathbf{x}=\mathbf{y}$ holds in $\ig{\cE}$ if and only if $i_1=j_1$, $\lambda_m=\mu_m$, and
$$
1\in(\{1\},\mathbf{x},\mathbf{y})\theta.
$$
\end{thm}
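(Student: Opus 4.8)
The plan is to prove the two implications in tandem, organising the whole argument as an induction on the length $m$ of the $\D$-fingerprint, with the single-junction case $m=2$ carrying all the essential content and the inductive step peeling off the first junction. The necessity of the endpoint conditions is immediate from the $\D$-fingerprint invariance already recorded in the excerpt: if $\ol{\mathbf{x}}=\ol{\mathbf{y}}$ then, since both products are minimal r-factorisations of the same element, \cite[Theorem 3.4]{YDG2} (strengthened in \cite[Theorem 4.2(4)]{Do2}) forces $\ol{(i_1,a_1,\lambda_1)}\,\R\,\ol{(j_1,b_1,\mu_1)}$ and $\ol{(i_m,a_m,\lambda_m)}\,\L\,\ol{(j_m,b_m,\mu_m)}$, which inside a fixed completely $0$-simple principal factor is exactly the equality of $\R$-indices $i_1=j_1$ and of $\L$-indices $\lambda_m=\mu_m$.

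The heart of the matter is a bridging analysis at a single junction. Because the factorisation is minimal, the product of two consecutive triples is \emph{not} regular, so the only relations that can link the $\L$-index $\lambda_k$ of one factor to the $\R$-index $i_{k+1}$ of the next come from idempotents $\ol{e}$ acting non-vacuously across the junction, i.e.\ with $\lambda_k=\mu\cdot\ol{e}$ and $\ol{e}\cdot i_{k+1}=j$. By \cite[Proposition 4.1]{YDG2} such an $\ol{e}$ effects precisely the coordinate change recorded by the edge label $(a,b^{-1})\in G_k\times G_{k+1}$ of \eqref{eq:a}--\eqref{eq:b}, namely the edge $(\lambda_k,i_{k+1})\to(\mu,j)$ of $\mathcal{A}(D_k,D_{k+1})$. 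Chaining such idempotents corresponds to concatenating edges into walks, so the collection of rewritings that route $\mathbf{x}$ across the $k$-th junction to the index pair $(\mu_k,j_{k+1})$ demanded by $\mathbf{y}$ is governed by the walks $(\lambda_k,i_{k+1})\leadsto(\mu_k,j_{k+1})$, with group adjustment the walk label taken modulo the closed-walk ambiguity $W_{(\lambda_k,i_{k+1})}$; this is exactly the coset $\rho_k=W_{(\lambda_k,i_{k+1})}(g_k,h_k)$.

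With this dictionary the reverse direction is a direct construction. Given the endpoint conditions and a witnessing tuple $(x_2,\dots,x_m)$ realising $(1,1)\in\rho$ — so that, writing $x_1=1$, one has $(a_k^{-1}x_kb_k,x_{k+1})\in\rho_k$ for $1\le k<m$ and $a_m^{-1}x_mb_m=1$ — I would read each membership as a concrete choice of bridging idempotents and singular-square substitutions \cite{GR1} that transforms $\mathbf{x}$ junction by junction into $\mathbf{y}$; the freedom in the closed-walk representative is harmless since it alters the label only by an element of $W_{(\lambda_k,i_{k+1})}$, which the coset already absorbs. Conversely, for necessity I would parse any equality $\ol{\mathbf{x}}=\ol{\mathbf{y}}$, after the $\R$-/$\L$-endpoint normalisation, into a finite sequence of these elementary moves and track their cumulative effect on the group coordinates; the running values in $G_2,\dots,G_m$ then constitute a witnessing tuple, whence $1\in(\{1\},\mathbf{x},\mathbf{y})\theta$.

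The step I expect to be the main obstacle is establishing that this bridging correspondence is \emph{faithful}: that equality in $\ig{\cE}$ holds if and only if one product is reachable from the other by contact-graph moves, with no further collapse and no missing identification. Soundness — that the moves never create a false equality — hinges on $W_{(\lambda_k,i_{k+1})}$ capturing exactly the closed-walk ambiguity, via the independence-of-choice clauses in \cite[Proposition 4.1]{YDG2} and the singular-square relations of \cite{GR1}; completeness — that equal elements are always so connected — requires controlling \emph{arbitrary} consequences of the defining relations of $\ig{\cE}$ and showing each preserves the coordinate data up to such a move. Marshalling both halves into a clean induction on $m$, so that peeling off junction $1$ yields a genuinely shorter instance with intermediate group datum $x_2\in G_2$, is the technically delicate part.
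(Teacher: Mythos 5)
First, a point of comparison: the paper itself does not prove this theorem --- it is imported verbatim (in a mildly reformulated setting) from \cite[Theorem 3.9]{Do2}, so the benchmark is the proof there, which rests on several sections of machinery: the gain-graph formalism over the biorder, the coset-transport result \cite[Theorem 3.8]{Do2}, and the fingerprint invariance of \cite[Theorem 3.4]{YDG2}. Against that benchmark, your necessity argument for the endpoint conditions is correct, and your soundness direction is essentially right: each edge of $\mathcal{A}(D_k,D_{k+1})$ labelled by $\ol{e}$ with $\lambda_k=\mu\cdot\ol{e}$ and $\ol{e}\cdot i_{k+1}=j$ can be realised by inserting and absorbing $\ol{e}$ at the junction, adjusting group coordinates by the labels \eqref{eq:a}--\eqref{eq:b}, so a witnessing tuple $(x_2,\dots,x_m)$ does yield $\mathbf{x}=\mathbf{y}$ in $\ig{\cE}$. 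But the converse --- completeness --- is the entire content of the theorem, and your proposal leaves it as an announced intention (``I would parse any equality \dots into a finite sequence of these elementary moves'') with no mechanism. The assertion that ``the only relations that can link $\lambda_k$ to $i_{k+1}$ come from idempotents acting non-vacuously across the junction'' is precisely what must be proven, not a starting point. A derivation of $\mathbf{x}=\mathbf{y}$ is a chain of words related by single applications of basic-pair relations; the intermediate words are not products of triples in minimal r-factorised form, a single rewriting can act inside a factor, smear material across a junction, or change the shape of the r-factorisation, and nothing in your sketch shows that the cumulative effect on the junction data $(\lambda_k,i_{k+1})$ and the group coordinates is exactly a walk label taken modulo $W_{(\lambda_k,i_{k+1})}$. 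This does not follow from \cite[Proposition 4.1]{YDG2} alone, which only describes the action of a single idempotent on a single regular $\D$-class.

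A second, structural problem: your proposed induction on $m$ is circular as framed. To peel off the first junction you must extract from the length-$m$ equality an element $x_2\in G_2$ and a genuinely shorter equality of products --- but the existence of such an $x_2$ \emph{is} the junction-decomposition claim at $k=1$, i.e.\ the completeness statement you are trying to prove. In \cite{Do2} the recursion in $m$ appears only in the \emph{computation} of $\theta$ (the left-to-right coset transport), while its correctness is established globally, via the invariance results and the analysis of how arbitrary words representing the same element relate, rather than by inducting on equalities of triple products. So the proposal correctly identifies the shape of the theorem and the role of the contact graphs, and honestly flags the hard step, but that step --- faithfulness of the bridging correspondence --- is a genuine gap, and it is exactly where all the difficulty of \cite[Theorem 3.9]{Do2} resides.
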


Since by \cite[Theorem 3.8(1)]{Do2} if $A\subseteq G_1$ is a coset of a subgroup of $G_1$ (it doesn't matter if it is left or right, because a left coset is of a subgroup 
is always a right coset of a conjugated subgroup), $(A,\mathbf{x},\mathbf{y})\theta$ is either empty or again a coset of a subgroup, it follows that the condition in the 
previous theorem is equivalent to saying that $(\{1\},\mathbf{x},\mathbf{y})\theta$ is a subgroup of $G_m$.

Similarly, by adapting Corollary 4.3 of \cite{Do2}, Green's relations in $\ig{\cE}$ can be expressed in terms of the map $\theta$ as follows.

\begin{thm} Let $\mathbf{x},\mathbf{y}\in\ig{\cE}$. If these elements are not of the same $\D$-fingerprint, they cannot be $\J$-related. Otherwise, if they are,
we have:
\begin{itemize}
\item[(i)] $\mathbf{x}\,\R\,\mathbf{y}$ if and only if $i_1=j_1$ and $(\{1\},\mathbf{x},\mathbf{y})\theta \neq \es$;
\item[(ii)] $\mathbf{x}\,\L\,\mathbf{y}$ if and only if $\lambda_m=\mu_m$ and $1\in (G_1,\mathbf{x},\mathbf{y})\theta$;
\item[(iii)] $\mathbf{x}\,\D\,\mathbf{y}$ if and only if $(G_1,\mathbf{x},\mathbf{y})\theta \neq \es$.
\end{itemize}
\end{thm}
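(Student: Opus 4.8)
The plan is to deduce the three equivalences from the word-problem characterisation stated just above, together with the coset-theoretic behaviour of $\theta$ recorded in \cite[Theorem 3.8]{Do2}, organising the argument as the translation of \cite[Corollary 4.3]{Do2} from the dual-group formulation into the present one. First I would settle the fingerprint claim. Since $\cE$ is finite we have $\D=\J$ in $\ig{\cE}$ by \cite[Theorem 4.2(5)]{Do2}, so $\J$-relatedness and $\D$-relatedness coincide; and by \cite[Theorem 4.2(4)]{Do2} (sharpening \cite[Theorem 3.4]{YDG2}) the $\D$-fingerprint is an invariant of each $\D$-class. Hence two elements with distinct fingerprints cannot be $\D$-related, and a fortiori not $\J$-related. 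This licenses the standing assumption that $\mathbf{x}$ and $\mathbf{y}$ carry the common fingerprint $(D_1,\dots,D_m)$, so that $(\cdot,\mathbf{x},\mathbf{y})\theta\colon\mathcal{P}(G_1)\to\mathcal{P}(G_m)$ is defined.

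For the core content I would prove (i) by unwinding $\mathbf{x}\,\R\,\mathbf{y}$ as $\mathbf{x}\le_\R\mathbf{y}$ and $\mathbf{y}\le_\R\mathbf{x}$, i.e.\ as mutual right-divisibility in $\ig{\cE}^1$. The governing principle is that right multiplication acts on a product of Rees triples purely at its trailing end: it can alter the final $\L$-coordinate and the accumulated group element, but never the leading row index, which is exactly the invariance $\ol{\mathbf{x}_1}\,\R\,\ol{\mathbf{y}_1}$ from \cite[Theorem 3.4]{YDG2} and forces the necessity of $i_1=j_1$. Fixing the left end exactly is what the singleton input $\{1\}$ encodes in the definition of $\theta$, while leaving the right end unconstrained (we do not prescribe the terminal $\L$-class) is precisely the passage from the membership test $1\in(\cdot)$ of the word-problem theorem to mere reachability $(\{1\},\mathbf{x},\mathbf{y})\theta\ne\es$. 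I would make the ``if'' direction constructive: a witnessing tuple $(x_2,\dots,x_m)$ together with its connecting walks assembles explicit right multipliers realising both $\le_\R$ relations; the ``only if'' direction reads the non-emptiness off any such multipliers. That non-emptiness is a robust, symmetric condition follows from \cite[Theorem 3.8(1)]{Do2}, by which $(\{1\},\mathbf{x},\mathbf{y})\theta$ is empty or a coset of a subgroup of $G_m$.

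Part (ii) is then the left--right dual: left multiplication fixes the trailing $\L$-index, forcing $\lambda_m=\mu_m$, whereas the leading index becomes free, which is modelled by letting the input range over all of $G_1$; here the right end must be pinned exactly, which is the membership $1\in(G_1,\mathbf{x},\mathbf{y})\theta$. Part (iii) follows from $\D=\R\circ\L$ by composing the two relaxations: one starts from all of $G_1$ and requires only reachability, yielding $(G_1,\mathbf{x},\mathbf{y})\theta\ne\es$, while conversely any realisation of the relation factors a connecting element through an $\R$-step followed by an $\L$-step. As a consistency check I would verify the base case $m=1$, where $\rho=\{(g,a_1^{-1}gb_1):g\in G_1\}$ and the three conditions degenerate to ``$i_1=j_1$'', ``$\lambda_1=\mu_1$'', and ``always true'', exactly matching the Rees structure of a single regular $\D$-class.

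I expect the principal obstacle to be the precise bookkeeping that turns the abstract two-sided divisibility conditions into the exact quantifier pattern of $\theta$ --- in particular, justifying rigorously why the left input must be the singleton $\{1\}$ for $\R$ but all of $G_1$ for $\L$ and $\D$, and why the right-hand test toggles between non-emptiness and the membership $1\in(\cdot)$. This rests on the multiplication rules for products of triples along the chain and on the coefficient bookkeeping $c=f_{i'\lambda_0}f_{i\lambda_0}^{-1}$ and $d=f_{i_0\lambda}^{-1}f_{i_0\lambda'}$ governing the idempotent actions, most of which is already isolated in \cite[Proposition 4.1]{YDG2} and \cite[Lemma 3.2]{Do2}; the residual task is the admittedly delicate matching of conventions with the dual-group statement of \cite[Corollary 4.3]{Do2}, which the remark opening this subsection promises amounts to ``an easy exercise''.
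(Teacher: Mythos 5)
Your outline is sound, but note that the paper itself contains no internal proof of this theorem: it is quoted directly as an adaptation of Corollary~4.3 of \cite{Do2}, with the only ``proof obligation'' in this paper being the translation from the dual-group formulation of \cite{Do2} into the modified setting of Subsection~2.4, which the author dismisses as an easy exercise. What you have produced is, in effect, a reconstruction of the argument living inside \cite{Do2} rather than anything paralleling text in this paper. As a reconstruction it is coherent: the fingerprint claim via \cite[Theorem 4.2(4),(5)]{Do2} is exactly right, the relaxation pattern (pin the left end with $\{1\}$ versus free it with $G_1$; test the right end by membership of $1$ versus mere non-emptiness) correctly explains the shape of (i)--(iii), the use of \cite[Theorem 3.8(1)]{Do2} for the coset structure is the right tool, and your $m=1$ sanity check does degenerate to the Rees-matrix description of $\R$, $\L$, $\D$ in a single regular $\D$-class. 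Two soft spots deserve flagging if you were to write this out. First, you cite \cite[Theorem 3.4]{YDG2} for the invariance $\ol{\mathbf{x}_1}\,\R\,\ol{\mathbf{y}_1}$, but that theorem concerns \emph{equal} elements $\ol{\mathbf{u}}=\ol{\mathbf{v}}$; for merely $\R$-related elements you need the strengthened invariance of \cite[Theorem 4.2]{Do2} or a separate argument. Second, your governing principle that ``right multiplication acts on a product of Rees triples purely at its trailing end'' is not automatic: multiplying $\mathbf{x}$ on the right can a priori cause a cascade in which trailing factors merge and the minimal r-factorisation restructures; ruling this out (via the fingerprint invariance, since $\mathbf{x}\,\R\,\mathbf{y}$ forces equal fingerprints of equal length) is precisely the bookkeeping you correctly identify as the principal obstacle, and it is carried out in \cite{Do2} with the gain-graph machinery rather than being elementary. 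So: correct in outline, honest about its gaps, but methodologically disjoint from the paper, whose entire proof is the citation.
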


As already mentioned, $\D=\J$ whenever $\cE$ is finite.

The case when $\mathbf{x}=\mathbf{y}$ is particularly interesting. Here we have that $(H,\mathbf{x},\mathbf{x})\theta$ is a subgroup of $G_m$ whenever $H$ is a subgroup of $G_1$. 
In addition, $(\{1\},\mathbf{x},\mathbf{x})\theta$ is a normal subgroup of $(G_1,\mathbf{x},\mathbf{x})\theta$, and the corresponding quotient is isomorphic precisely to the 
Sch\"utzenberger group of the $\H$-class of $\mathbf{x}$ (see \cite{HoBook}). In fact, exactly along the lines of the proof of Proposition 4.6 of \cite{Do2} it can be proved 
that whenever $H,K$ are two subgroups of $G_1$ such that $K$ is normal in $H$, then $(K,\mathbf{x},\mathbf{x})\theta$ is normal in $(H,\mathbf{x},\mathbf{x})\theta$.

\subsection{The word problem of $\ig{\cE}$ reduced to group theory}\label{subsec:comp}

Now, let us take a slightly more detailed look at the process of computing $(Ht,\mathbf{x},\mathbf{y})\theta$, where $H$ is a subgroup of $G_1$ and $t\in G_1$. We do this to 
make it abundantly clear that this process entirely relies on the knowledge of vertex groups of the contact graphs, along with the information about their connected components 
and the choice of coset representatives arising from walks within these components. We remind, once again, that the result is either the empty set or a coset of a subgroup of $G_m$.

We define two sequences of subgroups $H_k,L_k$ of $G_k$, and two sequences of elements $t_k,z_k\in G_k$, $1\leq k\leq m$, in a recursive fashion (more precisely, at some point, 
some of the sets $H_k$ may become empty, at which point all further $L_k$ are also empty, and the definitions of $t_k,z_k$ become irrelevant). First we set $H_1=H$ and $t_1=t$, 
and then, assuming $H_k$ and $t_k$ have already been defined, let
\begin{align*}
L_k &= a_k^{-1}H_ka_k = H_k^{a_k}, \\
z_k &= a_k^{-1}t_kb_k = t_k^{a_k}a_k^{-1}b_k,\\
H_{k+1}t_{k+1} &= (L_kz_k)\varphi_{\rho_k}, 
\end{align*}
where $\rho_k$ is either $W_{(\lambda_k,i_{k+1})}(g_k,h_k)$, the coset of the vertex group, or the empty relation (depending whether a walk $(\lambda_k,i_{k+1})\leadsto(\mu_k,j_{k+1})$ 
exists or not). As already remarked (and shown in \cite[Theorem 3.8]{Do2}), the last of these recurrences makes sense because its right hand side is the second projection of 
the intersection of cosets
\begin{equation}\label{eq:intersect}
(L_k\times G_{k+1})(z_k,1) \cap W_{(\lambda_k,i_{k+1})}(g_k,h_k),
\end{equation}
which itself is either empty, or a coset of $(L_k\times G_{k+1}) \cap W_{(\lambda_k,i_{k+1})}$. Thus, $H_{k+1}$ is either empty, or the second projection of the subgroup 
$$M_k=(L_k\times G_{k+1}) \cap W_{(\lambda_k,i_{k+1})}$$ 
of $G_k\times G_{k+1}$, and in the latter case the second projection in question is indeed a coset of $H_{k+1}$ (with $t_{k+1}$ chosen arbitrarily such that $(\gamma,t_{k+1})$ belongs 
to the intersection \eqref{eq:intersect} for some $\gamma\in G_k$). Finally, from the very definition of $\theta$ it follows that $(Ht,\mathbf{x},\mathbf{y})\theta = L_mz_m$.


\section{Computing the vertex groups for $\ig{\cE_{\T_n}}$}\label{sec:main}

\subsection{General observations}

We start by recalling a very important remark from \cite{GR1} (made at the beginning of Section 3 of that paper) that the action that elements of $\ol{e}\in\ol{E}$ exercise on 
$\H$-classes contained in an $\R$-class of an idempotent $\ol{f}$ in $\ig{\cE}$ (so, an $\R$-class from a regular $\D$-class) is equivalent the the action that elements of $e\in E$
exercise on $\H$-classes of the $\R$-class $R_f$ in an idempotent-generated semigroup $S$ such that $\cE\cong\cE_S$. (An analogous statement is true for $\H$-classes contained in fixed
regular $\L$-class.) This follows from property (IG3) from that paper, which in turn is a consequence of \cite{FG}. This will substantially facilitate our considerations and computations,
as it means that the (partial) action of $\ol{e}$ on the index sets $I,\Lambda$ associated with the principal factor corresponding to a $\D$-class $D$ such that $D\leq D_{\ol{e}}$ can be
``read off'' already from the semigroup $S$ itself. More precisely, we can formalise this via the following statement. 

\begin{lem}
Let $D$ be a regular $\D$-class of $\ig{\cE}$, with $I,\Lambda$ being the index sets of the collections of $\R$- and $\L$-classes, respectively, contained in $D$. Let $\ol{e}\in\ol{E}$.
Then $\ol{e}\cdot i = i'$ (and so $\ol{e}(i,g,\lambda)=(i',h,\lambda)$ holds for some $\lambda\in\Lambda$ and elements $g,h$ of the maximal subgroup in $D$) if and only if $eH_{i\lambda}
=H_{i'\lambda}$ holds in $S$. Similarly, we have $\lambda\cdot\ol{e}=\lambda'$ if and only if $H_{j\lambda}e=H_{j\lambda'}$ holds in $S$ for some $j\in I$.
\end{lem}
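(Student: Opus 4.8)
The plan is to transport the two partial actions across the homomorphism $\Psi:\ig{\cE}\to S$ and to lean on the bijective correspondence between the $\R$-/$\L$-classes of $D$ and those of the corresponding regular $\D$-class of $S$ (call the latter $D_S$), recorded in the bulleted list of Subsection~2.2. That correspondence lets us use the \emph{same} index sets $I,\Lambda$ on both sides and gives $\Psi(H_{i\lambda})=H_{i\lambda}$ (the left $H_{i\lambda}$ taken in $\ig{\cE}$, the right one in $S$), so that every $(i,g,\lambda)\in D$ is carried by $\Psi$ into the $\H$-class $H_{i\lambda}$ of $S$. In essence the statement is a careful rephrasing of the remark opening this section, namely that the action of $\ol{e}$ on the $\H$-classes of a fixed $\R$-/$\L$-class of $\ig{\cE}$ is, via $\Psi$, the same as the action of $e$ on the corresponding $\H$-classes of $S$; this is property (IG3) of \cite{GR1}, itself a consequence of \cite{FG}. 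I treat only the left action, the right-action claim being exactly dual (interchange left/right multiplication and the roles of $I,\Lambda$ and of $\R,\L$).

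For the forward implication, suppose $\ol{e}\cdot i=i'$, i.e.\ $\ol{e}(i,g,\lambda)=(i',h,\lambda)$ in $\ig{\cE}$ for suitable $g,h$ and some $\lambda$. Applying $\Psi$ and using $\Psi(\ol{e})=e$ together with the $\H$-class correspondence, we obtain $e\,s=s'$ with $s\in H_{i\lambda}$ and $s'\in H_{i'\lambda}$ in $S$; in particular $eH_{i\lambda}\cap H_{i'\lambda}\neq\es$. Reading this inside the completely $0$-simple principal factor of $D_S$, left multiplication by the idempotent $e$ acts uniformly on $H_{i\lambda}$: it either sends the whole class strictly $\J$-below $D_S$, or maps it bijectively onto a single $\H$-class lying in $L_\lambda$. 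Since one element lands in $H_{i'\lambda}\subseteq D_S$, the second alternative holds and $eH_{i\lambda}=H_{i'\lambda}$, as required.

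The substance of the argument is the converse, where the difficulty is to verify that the relevant product does not collapse into a lower $\D$-class of $\ig{\cE}$. Assume $eH_{i\lambda}=H_{i'\lambda}$ in $S$, fix any $(i,g,\lambda)\in D$, and set $z=\ol{e}(i,g,\lambda)$. From $z\le_\J(i,g,\lambda)$ we know $z$ lies either in $D$ or strictly $\J$-below it. Applying $\Psi$ gives $\Psi(z)=e\,\Psi((i,g,\lambda))\in eH_{i\lambda}=H_{i'\lambda}\subseteq D_S$. Now the structural content recorded in the opening remark guarantees that $\Psi$ restricts to a bijection between the regular $\D$-classes of $\ig{\cE}$ and $S$ \emph{respecting the $\J$-order}, so an element sitting strictly $\J$-below $D$ would be sent strictly $\J$-below $D_S$, contradicting $\Psi(z)\in D_S$. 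Hence $z\in D$, so $z$ belongs to some $\H$-class $H_{i''\lambda''}$ of $D$; applying $\Psi$ once more and invoking the bijectivity of the $\R$-/$\L$-class correspondence, which forces $\Psi$ to match indices, we read off $i''=i'$ and $\lambda''=\lambda$. Thus $z=(i',h,\lambda)$ for some $h$, i.e.\ $\ol{e}\cdot i=i'$.

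The main obstacle, as indicated, is precisely this ``no collapse'' step in the converse: everything else is a mechanical application of $\Psi$ and of the Rees-matrix bookkeeping in the principal factor. That step is where one genuinely needs the structural input behind (IG3)/\cite{FG} — namely that $\Psi$ neither merges distinct regular $\D$-classes nor distorts the $\J$-order — rather than the soft fact that a homomorphism merely preserves Green's preorders.
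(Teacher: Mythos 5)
Your forward direction is fine (apply $\Psi$, then use Green's lemma inside the completely $0$-simple principal factor), and so is the index-matching at the end of the converse \emph{once} $z\in D$ is known. The genuine gap is exactly the step you flag as the crux, and the principle you invoke to close it does not hold. You argue: $z=\ol{e}(i,g,\lambda)$ is either in $D$ or strictly $\J$-below it, and if strictly below, then ``$\Psi$ respects the $\J$-order bijectively on regular $\D$-classes'' would force $\Psi(z)$ strictly $\J$-below $D_S$. But the bijective, order-respecting correspondence of Subsection~2.2 is a statement about \emph{regular} $\D$-classes only, and at this point $z$ is not known to be regular --- indeed, ruling out that the product $\ol{e}(i,g,\lambda)$ degenerates into a non-regular element is precisely what has to be proved. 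For non-regular elements, $\Psi$ emphatically does not reflect $\J$-class membership: in $\ig{\cE_{\T_n}}$, take rank-$m$ idempotents $e,f$ with $\im e$ neither saturating nor separated by $\ker f$; then $\ol{e}\,\ol{f}$ is non-regular in $\ig{\cE_{\T_n}}$, hence lies in no regular $\D$-class and is $\J$-related to no regular element, while $\Psi(\ol{e}\,\ol{f})=ef$ lies squarely in the regular $\D$-class $D_{r'}$ of $S'$, $r'=\rank(ef)$. So ``$\Psi$ neither merges distinct regular $\D$-classes nor distorts the $\J$-order'' is false if read as a property of $\Psi$ on all elements, and insufficient if read only about regular classes; either way it cannot carry the weight of the ``no collapse'' step. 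Your final sentence, attributing that step to (IG3)/\cite{FG}, effectively concedes this --- but then the proof reduces to citing the very fact being formalised, i.e.\ it is circular as an independent argument.

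For context: the paper offers no proof of this lemma at all; it is presented explicitly as a formalisation of the remark opening Section~3, namely property (IG3) of \cite{GR1}, itself a consequence of \cite{FG}. A genuine proof of the converse runs through idempotent witnesses rather than order-theoretic properties of $\Psi$: from $e\Psi(x)\in D_S$ with $\Psi(x)$ regular, Fitz-Gerald's technique produces idempotents of $S$ lying in $D_S$ that form basic pairs realising the left multiplication by $e$, so that the computation transfers to $\ig{\cE}$ along the defining relations of the presentation; this is what keeps $\ol{e}(i,g,\lambda)$ inside $D$. (Note also that a $\T_n$-specific argument via Lemma~\ref{lem:2reg} would not do, both because the present lemma is stated for arbitrary $\cE$ and $S$, and because the proof of Lemma~\ref{lem:2reg} already relies on the restatement of this lemma.)
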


We remind to the facts explained in Example \ref{exa:Tn} that in $\T_n$, transformations on the $n$-element set $[1,n]=\{1,\dots,n\}$ are classified into (regular) $\D$-classes $D_m$ 
according to their rank $m$ (the size of their image), and the corresponding index sets are $I_m$, consisting of all partitions of $[1,n]$ into $m$ classes (the kernels of 
transformations), and $\Lambda_m$, consisting of all $m$-element subsets of $[1,n]$ (the images of transformations) . The role of $S$ (with respect to $\ig{\cE_{\T_n}}$) is taken by 
the idempotent-generated subsemigroup of $\T_n$, which is, by the main result of \cite{Ho}, just $\T_n$ stripped from all the non-trivial permutations, $(\T_n\setminus\mathbb{S}_n)\cup
\{\id_n\}$. As for the maximal subgroups \cite{GR2}, they are the same (in regular $\D$-classes of $\ig{\cE_{\T_n}}$ and $\T_n$), the symmetric group $\mathbb{S}_m$, whenever $m\leq n-2$. 
The only differences arise when $m=n$, when, of course, in $\ig{\cE_{\T_n}}$ the corresponding maximal subgroup is trivial, and when $m=n-1$ when the maximal subgroup in $\ig{\cE_{\T_n}}$
is free of rank $\binom{n}{2}-1$ (and not $\mathbb{S}_{n-1}$). Therefore, a typical regular element of $\ig{\cE_{\T_n}}$ from $\ol{D}_m=D_m\Psi$ may be written as a triple $(P,g,A)$,
where $P$ is a partition of $[1,n]$ into $m$ classes, $A$ and $m$-element subset of $[1,n]$ and $g$ a member of the maximal subgroup contained in $\ol{D}_m$.

We begin with what is essentially a restatement of the previous lemma in the context of $\T_n$ and $\ig{\cE_{\T_n}}$, and its proof is an easy exercise for the reader. For a subset 
$A\subseteq [1,n]$ and a partition $P$ of $[1,n]$ we say that $A$ \emph{saturates} $P$ if every $P$-class contains at least one element of $A$. Also, we say that $P$ \emph{separates} 
$A$ if every $P$-class contains at most one element of $A$. (Clearly, $A\perp P$ if and only if both $A$ saturates $P$ and is separated by $P$.)

\begin{lem}\label{lem:action} 
Let $A,B$ be $m$-element subsets of $[1,n]$ and $P,Q$ partitions of $[1,n]$ into $r$ classes. Let $e$ be an idempotent transformation on $[1,n]$.
\begin{enumerate}
\item $A=B\cdot\ol{e}$ exists if and only if $\ker e$ separates $B$ (in which case $A=Be$).
\item $Q = \ol{e}\cdot P$ exists if and only if $\im e$ saturates $P$ (in which case the classes of $Q$ are the inverse images of those of $P$ under $e$, each of which being a union of certain
$(\ker e)$-classes).
\end{enumerate}
\end{lem}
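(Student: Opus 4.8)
The plan is to deduce both parts directly from the preceding (unlabelled) lemma, which translates the partial actions $\ol{e}\cdot i$ and $\lambda\cdot\ol{e}$ on the index sets of a regular $\D$-class of $\ig{\cE}$ into genuine $\H$-class equalities $eH_{i\lambda}=H_{i'\lambda}$ and $H_{j\lambda}e=H_{j\lambda'}$ holding in a semigroup $S$ with $\cE\cong\cE_S$. Here we take $S=(\T_n\setminus\mathbb{S}_n)\cup\{\id_n\}$ and invoke the description of Green's relations in $\T_n$ from Example \ref{exa:Tn}: the $\R$-class of a transformation is determined by its kernel, its $\L$-class by its image, and an $\H$-class $H_{P,A}$ (the maps with kernel $P$ and image $A$) is non-empty precisely when $|A|$ equals the number of classes of $P$. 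Thus everything reduces to computing the kernel and image of the products $fe$ and $ef$, using only the elementary way in which kernels and images transform under composition.

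For part (1) I would fix $f$ with $\im f=B$ and examine $fe$. Its image is $\im(fe)=Be$, which has size $m$ if and only if $e$ is injective on $B$, i.e.\ no two elements of $B$ share a $\ker e$-class, i.e.\ $\ker e$ separates $B$. When this holds one checks that $\ker(fe)=\ker f$ (because $e$ is injective on $\im f$), so $fe\,\R\,f$ and the $\R$-index $j$ is preserved, giving $H_{j,B}e=H_{j,Be}$; by the preceding lemma this is exactly $B\cdot\ol{e}=Be$, so $A=Be$. If instead $\ker e$ fails to separate $B$, then $|Be|<m$, the product $fe$ drops into a lower $\D$-class, and the right action of $\ol{e}$ is vacuous at $B$.

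For part (2) I would fix $f$ with $\ker f=P$ and examine $ef$. The kernel of $ef$ is the pullback of $P$ along $e$: two points lie in the same $\ker(ef)$-class precisely when their $e$-images lie in the same $P$-class, so the classes of $\ker(ef)$ are the non-empty preimages $e^{-1}(P_k)$ of the $P$-classes $P_k$. This partition has exactly $r$ classes if and only if every $P_k$ meets $\im e$, i.e.\ $\im e$ saturates $P$; otherwise some preimage is empty, the rank drops below $r$, and the action is vacuous. Each $e^{-1}(P_k)$ is manifestly a union of $\ker e$-classes. It remains to confirm that the $\L$-index is preserved, i.e.\ $\im(ef)=\im f$: since $\im(ef)=(\im e)f$ and saturation guarantees that $\im e$ meets every $\ker f$-class, $f$ carries $\im e$ onto all of $\im f$, matching $eH_{P,\lambda}=H_{Q,\lambda}$ in the preceding lemma with $Q=e^{-1}(P)$.

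The only real bookkeeping is verifying that the relevant index ($j$ in part (1), $\lambda$ in part (2)) is genuinely preserved, so that the products stay inside the $\D$-class and match the $\H$-class equalities supplied by the preceding lemma, together with checking that falling below rank $m$ (respectively $r$) is exactly equivalent to the failure of separation (respectively saturation). A pleasant consistency check, which I would mention in passing, is that if $\rank e<m$ then $\ker e$ has fewer than $m$ classes and so cannot separate an $m$-set, while if $\rank e<r$ then $\im e$ has fewer than $r$ points and so cannot saturate an $r$-class partition; thus the combinatorial conditions automatically encode that $\ol{e}$ must lie in a $\D$-class that is $\J$-above the one under consideration. Finally, idempotency of $e$ plays no role in any of these identities -- it is needed only for $\ol{e}$ to be a letter of the biorder -- so the whole argument is indeed the routine exercise advertised.
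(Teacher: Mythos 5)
Your proof is correct and takes precisely the route the paper intends: the paper gives no written proof, declaring the lemma ``essentially a restatement of the previous lemma in the context of $\T_n$'' and ``an easy exercise for the reader'', and your argument---translating the partial actions into the $\H$-class equalities $H_{j\lambda}e=H_{j\lambda'}$ and $eH_{i\lambda}=H_{i'\lambda}$ in $S=(\T_n\setminus\mathbb{S}_n)\cup\{\id_n\}$ and then computing $\ker$ and $\im$ of $fe$ and $ef$---is exactly that exercise carried out. The rank-drop argument for the converse directions, the verification that the relevant index (kernel in part (1), image in part (2)) is preserved, and the remark that the preimage classes are unions of $(\ker e)$-classes are all in order.
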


Besides supplying essential information about the edges in the contact graph $\mathcal{A}(\ol{D}_m,\ol{D}_r)$, this enables us to formulate a regularity criterion within $\ig{\cE_{\T_n}}$ that 
provides us with information beyond that following from \cite[Lemma 6.2]{YDG2}. 

\begin{lem}\label{lem:2reg}
Let $(P,g,A)\in\ol{D}_m$ and $(P',g',A')\in\ol{D}_r$ be two regular elements of $\ig{\cE_{\T_n}}$. Then the product $(P,g,A)(P',g',A')$ is regular if and only if 
\begin{itemize}
\item either $m\geq r$ and $A$ saturates $P'$, or
\item $m\leq r$ and $P'$ separates $A$.
\end{itemize}
\end{lem}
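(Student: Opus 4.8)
The plan is to recast regularity of the product as a statement about ranks under the structural homomorphism $\Psi$ into $\T_n$, and then to read off its combinatorial content via Example~\ref{exa:Tn}. Put $f=(P,g,A)\Psi$ and $h=(P',g',A')\Psi$, so that $f$ has image $A$ and $h$ has kernel $P'$; then $\rank(fh)$ equals the number of $P'$-classes meeting $A$. This number never exceeds $\min(m,r)$, it equals $r$ precisely when $A$ saturates $P'$, and it equals $m$ precisely when $P'$ separates $A$. Hence the condition in the statement is exactly the assertion that $\rank(fh)=\min(m,r)$, i.e.\ that the product attains the maximal rank permitted by its factors, and the whole lemma reduces to showing that $(P,g,A)(P',g',A')$ is regular if and only if $\rank(fh)=\min(m,r)$.

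For the forward implication I would invoke the rewriting theorem of \cite{FG} to write $(P,g,A)=\ol{e_1\cdots e_p}$ with every $\ol{e_i}\in\ol{D}_m$ and $(P',g',A')=\ol{f_1\cdots f_q}$ with every $\ol{f_j}\in\ol{D}_r$. The concatenation $e_1\cdots e_pf_1\cdots f_q$ is then a word representing the product, all of whose letters lie in $\ol{D}_m\cup\ol{D}_r$. If the product is regular, the seed criterion of \cite{DGR} produces a letter of this word that is $\D$-related to the product; since every letter sits in $\ol{D}_m$ or in $\ol{D}_r$, the product must lie in $\ol{D}_m$ or $\ol{D}_r$, so that $\rank(fh)\in\{m,r\}$. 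Together with $\rank(fh)\le\min(m,r)$ this forces $\rank(fh)=\min(m,r)$, which is the required combinatorial condition.

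The converse is the crux, and the main difficulty is to exhibit a certificate of regularity directly, rather than arguing from a rank that we only know is attained in $\T_n$ and not yet in $\ig{\cE_{\T_n}}$. My device to avoid this circularity is that $\R$ is a left congruence and $\L$ a right congruence, combined with a single application of Lemma~\ref{lem:action} to a product of two idempotents. Fix an idempotent $\ol{e_A}\in\ol{D}_m$ with image $A$ (so $\ol{e_A}\,\L\,(P,g,A)$ and $(P,g,A)\ol{e_A}=(P,g,A)$) and an idempotent $\ol{f_{P'}}\in\ol{D}_r$ with kernel $P'$ (so $\ol{f_{P'}}\,\R\,(P',g',A')$ and $\ol{f_{P'}}(P',g',A')=(P',g',A')$); these exist because the $\L$-class indexed by $A$ and the $\R$-class indexed by $P'$ each contain an idempotent. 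If $P'$ separates $A$, then Lemma~\ref{lem:action}(1) says that the right action $A\cdot\ol{f_{P'}}$ is defined, which means that $\ol{e_A}\,\ol{f_{P'}}$ stays in $\ol{D}_m$ with the same $\R$-index as $\ol{e_A}$, whence $\ol{e_A}\,\ol{f_{P'}}\,\R\,\ol{e_A}$; dually, if $A$ saturates $P'$, then $\ol{e_A}\cdot P'$ is defined and $\ol{e_A}\,\ol{f_{P'}}\,\L\,\ol{f_{P'}}$.

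It then remains to propagate this from the two idempotents to the full product, using only multiplications on the admissible side. Suppose $m\le r$ and $P'$ separates $A$. Left-multiplying $(P',g',A')\,\R\,\ol{f_{P'}}$ by $\ol{e_A}$ (legitimate since $\R$ is a left congruence) gives $\ol{e_A}(P',g',A')\,\R\,\ol{e_A}\,\ol{f_{P'}}\,\R\,\ol{e_A}$, and a further left-multiplication by $(P,g,A)$ yields $(P,g,A)(P',g',A')\,\R\,(P,g,A)\ol{e_A}=(P,g,A)$; since $(P,g,A)$ is regular and regularity is a $\D$-class invariant, the product is regular. The case $m\ge r$ with $A$ saturating $P'$ is entirely symmetric, this time right-multiplying $(P,g,A)\,\L\,\ol{e_A}$ first by $\ol{f_{P'}}$ and then by $(P',g',A')$ to obtain $(P,g,A)(P',g',A')\,\L\,(P',g',A')$. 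The only point requiring care is to keep each congruence step on its correct side; this is exactly what allows the argument to conclude regularity without presupposing it.
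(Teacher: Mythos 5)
Your proof is correct, but it takes a genuinely different route from the paper's, most notably in the converse direction. In the forward direction you share the paper's starting point (the rewriting of both factors via \cite{FG} and the seed criterion of \cite{DGR}), but where the paper analyses the \emph{position} of the seed --- arguing that for $m\ge r$ it must be one of the letters $f_j$, invoking \cite[Remark 2.6]{YDG2} to obtain $\ol{e_sf_1}\,\L\,\ol{f_1}$, and then extracting saturation inside $\ig{\cE_{\T_n}}$ --- you use the seed only to place the product in $\ol{D}_m\cup\ol{D}_r$, push through $\Psi$, and finish with the elementary rank computation in $\T_n$: $\rank(fh)$ equals the number of $P'$-classes meeting $A$, so regularity forces $\rank(fh)=\min(m,r)$, which unifies the two cases of the statement into a single condition. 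In the converse, the paper again rewrites both factors as idempotent words, invokes \cite[Lemma 6.2]{YDG2} to get the cross-section conditions $A_i\perp P_{i+1}$ and $B_j\perp Q_{j+1}$, and propagates saturation inductively along the word by repeated applications of Lemma \ref{lem:action}, concluding that the product lies in $\ol{D}_r$. Your ``two-idempotent bridge'' --- choosing $\ol{e_A}$ in the $\L$-class of the left factor and $\ol{f_{P'}}$ in the $\R$-class of the right factor, applying Lemma \ref{lem:action} exactly once to the product $\ol{e_A}\,\ol{f_{P'}}$, and then transporting the resulting $\R$- (respectively $\L$-) relation by the left (respectively right) congruence property of $\R$ (respectively $\L$) --- avoids both the rewriting of the factors and the induction along the word; the existence of the idempotents $\ol{e_A}$, $\ol{f_{P'}}$ and their identity properties on their own $\L$-/$\R$-classes, which your argument needs, are standard and correctly justified. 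What the paper's longer converse buys in exchange is explicit incidental information (how the partitions coarsen step by step as one absorbs the idempotent word), but your argument delivers the same $\D$-class location of the product --- $\R$-related to $(P,g,A)$ when $m\le r$, $\L$-related to $(P',g',A')$ when $m\ge r$ --- with visibly less machinery.
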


\begin{proof}
($\Ra$) Assume that $(P,g,A)(P',g',A')$ is regular and that $m\geq r$. As shown in the proof of \cite[Proposition 4.1]{YDG2}, and following from \cite{FG}, each of the elements can
be rewritten as a product of idempotents from their own $\D$-classes: so, there are $\ol{e_1},\dots,\ol{e_s}\in\ol{D}_m$ and $\ol{f_1},\dots,\ol{f_t}\in\ol{D}_r$ such that 
\begin{align*}
(P,g,A) &= \ol{e_1}\dots \ol{e_s}, \\
(P',g',A') & = \ol{f_1}\dots\ol{f_t}.
\end{align*}
Now, since the product $\ol{e_1}\dots\ol{e_s}\ol{f_1}\dots\ol{f_t}$ is regular, it must contain a seed; since $r\leq m$, this must be some (and thus any) of the $f_1,\dots,f_t$. In
particular, $f_1$ is a seed letter. But then, by \cite[Remark 2.6]{YDG2}, $\ol{e_sf_1}\,\L\,\ol{f_1}$. On the other hand, $\ol{f_1}\,\R\,\ol{f_1}\dots\ol{f_t}=(P',g',A')$, which
means that $\ker f_1=P'$ and so $\ol{f_1}$ has a representation of the form $(P',h_1,B)$ (for some group element $h_1$ and an $r$-element subset $B$). It follows that 
$\ol{e_s}\cdot P'$ exists, which by the previous lemma means that $\im e_s$ saturates $P'$. However, $\ol{e_1}\dots\ol{e_s}\,\L\,\ol{e_s}$, so $\im e_s = A$, and the claim
follows. We argue in a very similar fashion when $m\leq r$.

($\Leftarrow$) Assume that $m\geq r$ and that $A$ saturates $P'$; the other case is handled analogously. As in the previous part of the proof, each of $(P,g,A)$, $(P',g',A')$ can be
written in $\ig{\cE_{\T_n}}$ as a product of idempotents from their $\D$-classes, just as above. So, if we write $\ol{e_i} = (P_i,g_i,A_i)$ for $1\leq i\leq s$ and $\ol{f_j} = 
(Q_j,h_j,B_j)$ for $1\leq j\leq t$, then by \cite[Lemma 6.2]{YDG2} we must have that $A_i\perp P_{i+1}$ for all $i<s$ and $B_j\perp Q_{j+1}$ for all $j<t$. Also, $A_s=A$ and
$Q_1=P'$. Since the former saturates the latter, by the previous lemma, $\ol{e_s}(P',h_1,B_1)=(P'',h_1',B_1)$, where the classes of $P''=\ol{e_s}\cdot P'$ arise as unions of
$P_s$-classes. Hence, $A_{s-1}$ saturates $P''$. Proceeding in this fashion, we conclude that $(P,g,A)(P',g',A')\in\ol{D}_r$, a regular element of $\ig{\cE_{\T_n}}$. 
\end{proof}

\subsection{The connected components of contact graphs}\label{subsec:conn}

As is well-known \cite{Ho,HoBook}, the idempotent-generated submonoid of $\T_n$ (also called the \emph{singular part} of $\T_n$ and denoted by $\mathrm{Sing}(\T_n)$) is generated
solely by the idempotent transformations of rank $n-1$. These are of the form $\eps_{ij}$ for $1\leq i\neq j\leq n$, so that
$$
k\eps_{ij} = \left\{\begin{array}{ll}
k & k\neq j, \\
i & k=j.
\end{array}\right.
$$
There are $2\binom{n}{2}$ such idempotents, two per each $\R$-class, $n-1$ of them in each of the $\L$-classes. In particular, every non-identity idempotent transformation in 
$\T_n$ can be expressed as a product of rank $n-1$ idempotents. This is reflected in contact graphs in the following way.

\begin{lem}
Let $m,r\leq n-1$ and let $(A,P)$, $(B,Q)$ be two vertices in the contact graph $\mathcal{A}(\ol{D}_m,\ol{D}_r)$ of $\ig{\cE_{\T_n}}$. Then there exists an edge in this graph 
$(A,P)\longrightarrow (B,Q)$ labelled by $e\in E(\T_n)$ if and only if there exist a sequence of vertices $(A_1,P_1),\dots,(A_{k-1},P_{k-1})$ and edges
$$
(A,P)\longrightarrow (A_1,P_1)\longrightarrow \dots \longrightarrow (A_{k-1},P_{k-1})\longrightarrow (B,Q)
$$
labelled, respectively, by $\eps_{i_kj_k},\dots,\eps_{i_1j_1}$ such that $e=\eps_{i_1j_1}\dots\eps_{i_kj_k}$ holds in $\T_n$.
\end{lem}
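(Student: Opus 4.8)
The plan is to convert the purely graph-theoretic statement into a statement about transformations via Lemma~\ref{lem:action}, and then to combine the multiplicativity of the two partial actions with Howie's factorisation of singular maps into rank $n-1$ idempotents \cite{Ho}. The first step is to record the combinatorial meaning of an edge. By the definition of $\mathcal{A}(\ol{D}_m,\ol{D}_r)$ together with Lemma~\ref{lem:action}, an edge $(A,P)\longrightarrow(B,Q)$ arises from an idempotent $e$ exactly when both partial actions are defined: on the image coordinate $A=B\cdot\ol{e}$, i.e.\ $\ker e$ separates $B$ and $A=Be$; and on the partition coordinate $Q=\ol{e}\cdot P$, i.e.\ $\im e$ saturates $P$ and the classes of $Q$ are the $e$-preimages of the classes of $P$. (Here ``labelled by $e$'' refers to the idempotent inducing the edge, not to the group label $(a,b^{-1})$.) Thus an edge is fully determined by these two one-sided actions.

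The key technical ingredient is a multiplicativity observation: for two consecutive edges $(A,P)\xrightarrow{\,f\,}(A',P')\xrightarrow{\,g\,}(B,Q)$ one has $A=A'f=(Bg)f=B(gf)$ on images, and, since maps act on the right, $f^{-1}(g^{-1}(C))=(gf)^{-1}(C)$ on partitions; so the two-step walk realises the single transformation $gf$ on both coordinates. Iterating, a walk with edge labels $\eps_{i_kj_k},\dots,\eps_{i_1j_1}$ realises the product $e=\eps_{i_1j_1}\cdots\eps_{i_kj_k}$ on both coordinates, which also accounts for the reversal between walk order and product order (the label entering the terminal vertex appears leftmost). The ($\Ra$-converse) direction is then immediate: given such a walk with $e\in E(\T_n)$, composition yields $A=Be$ and $Q=e^{-1}(P)$; since the walk passes through genuine vertices, $|A|=|B|=m$ forces $\ker e$ to separate $B$, and $Q$ having $r$ classes forces $\im e$ to saturate $P$, whence by Lemma~\ref{lem:action} the edge $(A,P)\longrightarrow(B,Q)$ labelled $e$ exists.

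For the forward direction I would begin with an edge labelled $e$. If $e=\id_n$ the statement is the trivial empty walk; otherwise $e$ is singular, so by \cite{Ho} we may write $e=\eps_{i_1j_1}\cdots\eps_{i_kj_k}$ with each factor of rank $n-1$. I would then define intermediate vertices by partial products: image coordinates $C_t=B\,\eps_{i_1j_1}\cdots\eps_{i_{k-t}j_{k-t}}$ (a left prefix) and partition coordinates $R_t=(\eps_{i_{k-t+1}j_{k-t+1}}\cdots\eps_{i_kj_k})^{-1}(P)$ (a right suffix), so that $(C_0,R_0)=(A,P)$ and $(C_k,R_k)=(B,Q)$. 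The point to check is that each $(C_t,R_t)$ is a genuine vertex and that consecutive ones are joined by the expected edge labelled $\eps_{i_{k-t+1}j_{k-t+1}}$. Here the rank bookkeeping is decisive: $|C_t|$ is non-decreasing in $t$ while $|C_0|=|C_k|=m$, forcing $|C_t|=m$ throughout; dually the number of classes of $R_t$ is non-increasing while both endpoints have $r$ classes, forcing exactly $r$ classes throughout. Consequently each $\eps_{i_{k-t+1}j_{k-t+1}}$ is injective on $C_t$ and its image saturates $R_{t-1}$, so by Lemma~\ref{lem:action} the edge $(C_{t-1},R_{t-1})\xrightarrow{\,\eps_{i_{k-t+1}j_{k-t+1}}\,}(C_t,R_t)$ genuinely occurs, and the walk assembles as required.

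The main obstacle I anticipate is exactly this rank-preservation step in the forward direction: one must rule out that some intermediate image drops below $m$ elements or some intermediate partition loses a class, since that is precisely what guarantees that each factor $\eps_{i_sj_s}$ labels a genuine (non-vacuous) edge rather than an undefined action. Once the ``monotone with matching endpoints, hence constant'' observation is isolated, the verification of the intermediate edges and the bookkeeping of the walk/product reversal are routine.
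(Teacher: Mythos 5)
Your proposal is correct and follows essentially the same route as the paper: factor $e$ via Howie's theorem into rank $n-1$ idempotents, define the intermediate vertices by prefix images $B\,\eps_{i_1j_1}\cdots\eps_{i_{k-t}j_{k-t}}$ and suffix preimage partitions of $P$, and compose the actions (with the same walk-order/product-order reversal) for the converse. Your ``monotone with matching endpoints, hence constant'' rank argument is exactly the verification the paper compresses into ``a short reflection upon multiple applications of Lemma~\ref{lem:action}'', so you have in fact made the one glossed-over step explicit.
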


\begin{proof}
Assume first that $\mathcal{A}(\ol{D}_m,\ol{D}_r)$ contains an edge $(A,P)\longrightarrow (B,Q)$ labelled by $e\in E(\T_n)$. As we may
safely assume that $e\neq\id_n$, by the main result of \cite{Ho}, $e$ can be written as a product of rank $n-1$ idempotents, say
$e=\eps_{i_1j_1}\dots\eps_{i_kj_k}$. Now define the following sequence of subsets of $[1,n]$:
\begin{align*}
A_1 &= B\cdot\ol{\eps_{i_1j_1}}\cdots\ol{\eps_{i_{k-1}j_{k-1}}}, \\
A_2 &= B\cdot\ol{\eps_{i_1j_1}}\cdots\ol{\eps_{i_{k-2}j_{k-2}}}, \\
&\vdots \\
A_{k-1} &= B\cdot\ol{\eps_{i_1j_1}},
\end{align*}
and partitions of $[1,n]$:
\begin{align*}
P_1 &= \ol{\eps_{i_kj_k}}\cdot P, \\
P_2 &= \ol{\eps_{i_{k-1}j_{k-1}}}\cdot\ol{\eps_{i_kj_k}}\cdot P, \\
&\vdots \\
P_{k-1} &= \ol{\eps_{i_2j_2}}\cdots\ol{\eps_{i_kj_k}}\cdot P.
\end{align*}
It takes only a short reflection (upon multiple applications of Lemma \ref{lem:action}) to see that all of these sets and partitions exist,
in the sense that they are all $m$-element subsets and $r$-element paritions of $[1,n]$, respectively. Also, it is now straightforward
to see that we have $A=A_1\cdot\ol{\eps_{i_kj_k}}$ and $\ol{\eps_{i_kj_k}}\cdot P=P_1$, witnessing the existence of an edge $(A,P)
\longrightarrow(A_1,P_1)$ labelled by $\eps_{i_kj_k}$. Furthermore, for all $1\leq s\leq k-2$ we have $A_s=A_{s+1}\cdot
\ol{\eps_{i_{k-s}j_{k-s}}}$ and $\ol{\eps_{i_{k-s}j_{k-s}}}\cdot P_s=P_{s+1}$, showing that there is an edge $(A_s,P_s)\longrightarrow
(A_{s+1},P_{s+1})$ labelled by $\ol{\eps_{i_{k-s}j_{k-s}}}$. Finally, the fact that $A_{k-1}=B\cdot\ol{\eps_{i_1j_1}}$ and
$\ol{\eps_{i_1j_1}}\cdot P_{k-1}=Q$ verifies the existence of an edge $(A_{k-1},P_{k-1})\longrightarrow(B,Q)$ labelled by 
$\ol{\eps_{i_1j_1}}$.

Conversely, assume that there is a walk
$$
(A,P)\longrightarrow (A_1,P_1)\longrightarrow \dots \longrightarrow (A_{k-1},P_{k-1})\longrightarrow (B,Q)
$$
where the edges are labelled, respectively, by $\eps_{i_kj_k},\dots,\eps_{i_1j_1}$, such that the product 
$e=\eps_{i_1j_1}\dots\eps_{i_kj_k}$ is an idempotent in $\T_n$. Then we have $A=A_1\cdot\ol{\eps_{i_kj_k}}$ and 
$\ol{\eps_{i_kj_k}}\cdot P=P_1$, as well as $A_s=A_{s+1}\cdot\ol{\eps_{i_{k-s}j_{k-s}}}$ and $\ol{\eps_{i_{k-s}j_{k-s}}}\cdot P_s=P_{s+1}$
for $1\leq s\leq k-2$, and $A_{k-1}=B\cdot\ol{\eps_{i_1j_1}}$ and $\ol{\eps_{i_1j_1}}\cdot P_{k-1}=Q$. The cumulative effect of these
equalities is that we have $A=B\cdot\ol{e}$ and $\ol{e}\cdot P = Q$, thus proving the existence of an edge $(A,P)\longrightarrow (B,Q)$
labelled by $e$.
\end{proof}

A direct consequence of the previous lemma is that, if we wish to investigate the connected components of contact graphs (which is the purpose of this subsection) it completely 
suffices to focus solely on edges labelled by rank $n-1$ idempotents. So, let us stop for a moment to take a closer look what does it means that we have an edge $(A,P)\longrightarrow 
(B,Q)$ labelled, say, by $\eps_{ij}$.

Indeed, then we have $A=B\eps_{ij}$ and $\ol{\eps_{ij}}\cdot P=Q$. As for the first equality, we have two cases to discuss. First, if $i\in B$, then necessarily $j\not\in B$ 
(for otherwise it would follow that $|B|>|A|$). However, in such a case it follows that $A=B$. Otherwise, $i\not\in B$. Now, if $j\not\in B$ also, then again $A=B$; however, 
if $j$ does belong to $B$, then $A$ is obtained from $B$ by removing $j$ from it and replacing it with $i$: $A=(B\setminus\{j\})\cup\{i\}$. In other words, $B=(A\setminus\{i\})\cup\{j\}$. 

Now let us analyse the second equation. If $i$ and $j$ belong to the same $P$-class, then it is clear that $P=Q$. Otherwise, $i\in P_{k_i}$ and $j\in P_{k_j}$ for some indices
$k_i\neq k_j$. Then the partition $Q$ is obtained from $P$ by adding $j$ to the class $P_{k_i}$ (as both $i,j$ belong to the inverse image of $i$ under $\eps_{ij}$), and consequently,
by removing $j$ from $P_{k_j}$. However, for this latter to be possible (i.e.\ that the described operation does not change the rank of the partition), we must have $|P_{k_j}|>1$,
that is, $P_{k_j}$ must contain at least one additional element except $j$; in other words, $P_{k_j}=\{j\}$ would exclude the possibility of existence of the edge we are considering.

Therefore, we can sum up that traversing an edge labelled by a rank $n-1$ idempotent $\eps_{ij}$ originating from a vertex $(A,P)$ amounts to performing the following ``elementary
step'':
\begin{itemize}
\item Pick $j\not\in A$ \emph{not} comprising a singleton class of $P$, and move it from its $P$-class to another one (or possibly the same one), say $P_s$.
\item If you wish, remove an element $i\in A\cap P_s$ from $A$ and replace it by $j$.
\end{itemize}
Of course, there is also the ``reverse step'', against the arrow, which goes as follows:
\begin{itemize}
\item Remove an element $j\in B$ from $B$ and replace it by (possibly the same) element $i$ from the same $Q$-class.
\item If the previous step is performed with $i\neq j$, then, if you wish, move $j$ from its $Q$-class to another one.
\end{itemize}
These conditions can be made even more compact by saying that moving forth and back along the edges labelled by rank $n-1$ idempotents allows us to do the following two types of moves
with combinatorial ``subset-partition structures'' of the form $(A,P)$, where $A$ is an $m$-element subset of $[1,n]$ and $P$ is a partition of $[1,n]$ into $r$ pieces:
\begin{itemize}
\item[(1)] Move around the subset elements within the given partition class.
\item[(2)] Remove a point currently not belonging to the subset from a non-singleton partition class and add it to another one.
\end{itemize}
(Some rank $n-1$ idempotent allow moves of type (1) and (2) simultaneously, but the same effect can be also achieved by traversing two consecutive edges as well.)

With this in mind, we can now proceed to formulate the connectedness criterion in $\mathcal{A}(\ol{D}_m,\ol{D}_r)$. For a pair $(A,P)$, $|A|=m$, $|P|=r$, we call its \emph{type} the
sequence of numbers $|A\cap P_1|,\dots,|A\cap P_r|$ sorted in non-increasing order; for example, if $n=9$, $A=\{1,3,5,7\}$ and $P=\{\{1,2,6\},\{3,5,7,9\},\{4,8\}\}$, then the type
of $(A,P)$ is $(3,1,0)$. When $(A,P)$ and $(B,Q)$ are of the same type, we say that they are \emph{homeomorphic} and write $(A,P)\sim(B,Q)$. This is the same as saying that there
is a pair of bijections $\phi:A\to B$ and $\psi:P\to Q$ such that for all $1\leq i\leq m$ and $1\leq j\leq r$ we have
$$
a_i\in P_j \quad \text{if and only if} \quad a_i\phi\in P_j\psi.
$$
The pair $(\phi,\psi)$ is then called a \emph{homeomorphism} between $(A,P)$ and $(B,Q)$. 

A pair $(A,P)$ is called \emph{stationary} if all $P$-classes containing elements not in $A$ are singletons.

\begin{pro}
Let $m,r\leq n-1$. Two different vertices $(A,P)$ and $(B,Q)$ of the graph $\mathcal{A}(\ol{D}_m,\ol{D}_r)$ are connected if and only if they are homeomorphic and none of them 
is stationary.
\end{pro}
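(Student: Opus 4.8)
The plan is to reduce everything to a simple arithmetic dichotomy governing stationarity, and then to a token--motion argument on the ``persistent blocks'' of the partition. Write $N=n-m$ for the number of points lying outside $A$, and let $p$ denote the number of $P$-classes meeting $A$ (equivalently, the number of non-zero entries of the type of $(A,P)$). Since each of the $r-p$ classes disjoint from $A$ is non-empty, hence contains at least one point outside $A$, we always have $N\ge r-p$; and unwinding the definition, $(A,P)$ is stationary precisely when those $r-p$ classes are singletons and the remaining $p$ classes are contained in $A$, that is, precisely when $N=r-p$. As this depends only on the type, I first record the dichotomy: for a fixed type either $N=r-p$, in which case \emph{every} vertex of that type is stationary, or $N>r-p$, in which case \emph{every} vertex of that type is non-stationary.

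For necessity, note first that each elementary move leaves the multiset $(|A\cap P_1|,\dots,|A\cap P_r|)$ unchanged: move (1) redistributes membership in $A$ within a single class, while move (2) transports a point outside $A$ between classes, so in both cases every individual class retains its number of $A$-points. Hence homeomorphism is invariant along edges, and connected vertices are homeomorphic. Moreover, at a stationary vertex neither move is available to reach a \emph{different} vertex: no class simultaneously contains a point of $A$ and a point outside $A$ (blocking move (1)), and every point outside $A$ sits in a singleton (blocking move (2)). By the lemma reducing arbitrary edges to walks labelled by rank $n-1$ idempotents, a stationary vertex is therefore isolated. Thus two different connected vertices must be homeomorphic and, being connected to something, of a type with $N>r-p$, whence neither is stationary.

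For sufficiency, assume $(A,P)\sim(B,Q)$ with both non-stationary, so their common type has $N>r-p$; by the dichotomy \emph{every} vertex of this type is non-stationary, which is the crucial point: since moves preserve the type, every configuration met along any walk is again non-stationary, so we never get stuck. I will connect an arbitrary such vertex to one fixed canonical vertex $C_\tau$. Viewing the $r$ classes as persistent blocks, each carrying its invariant number of $A$-points, and using $m\ge 1$ (so $p\ge 1$), I designate one $A$-meeting block as a \emph{reservoir} and define $C_\tau$ by placing all $N-(r-p)$ surplus points outside $A$ into the reservoir, leaving every other $A$-meeting block equal to its set of marked points and every $A$-free class a singleton; this is non-stationary exactly because $N>r-p$. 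To reach the \emph{shape} of $C_\tau$ from $(A,P)$, I move every surplus point outside $A$ into the reservoir by move (2): a block can export a point whenever it has size $\ge 2$ and contains a point outside $A$, and move (1) lets me free any chosen point, so emptying the non-reservoir blocks down to the prescribed sizes is routine. Finally, using the reservoir as a hub of freely available points outside $A$, I realise an arbitrary permutation of point identities by token sliding on the (complete) graph of blocks, landing on the exact vertex $C_\tau$.

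The main obstacle is precisely this last routing step. Only points outside $A$ may be transported directly, so to relocate a point that must end up in $A$ one first switches its membership onto a free point via move (1), transports it as a point outside $A$, and then re-marks at the destination, all the while using the reservoir as a buffer and checking that no block is emptied and that each source retains size $\ge 2$. What makes this bookkeeping go through without pathology is exactly the dichotomy established at the outset: as every intermediate configuration is non-stationary, the required move is always available, so the token motion can be completed and $C_\tau$ reached. Connecting both $(A,P)$ and $(B,Q)$ to $C_\tau$ then yields the desired walk between them.
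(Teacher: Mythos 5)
Your proof is correct, and it is organized along a genuinely different route than the paper's, even though both arguments ultimately rest on the same reduction of arbitrary edges to walks labelled by rank $n-1$ idempotents and on the same two elementary moves. The first difference is your dichotomy lemma: you characterise stationarity arithmetically ($(A,P)$ is stationary precisely when $n-m$ equals the number of $P$-classes disjoint from $A$), which shows stationarity is an invariant of the type, and you then use this globally — since moves preserve type, no intermediate configuration along a walk can ever become stationary, so a free point is always available and the construction ``never gets stuck.'' The paper does not state this; it instead maintains the supply of free points by local bookkeeping, remarking after each maneuver which point has just become free. Your invariance observation is a cleaner, more robust justification of the same fact. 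The second difference is the shape of the sufficiency argument: you route both $(A,P)$ and $(B,Q)$ to a single fixed canonical vertex $C_\tau$ of the common type and then realise an arbitrary relabelling by token sliding through the reservoir, whereas the paper fixes a homeomorphism $(\phi,\psi)$ up front and proceeds in two stages — an induction on the number of fixed points of $\phi$ (with three cases according to whether $a\phi$ lies in $A$, is a free point, or forms a singleton class), transforming $(A,P)$ into $(B,Q')$ with $Q'$ inducing the same partition on $B$ as $Q$, followed by a canonical-form argument for the partition part only, pushing both $(B,Q)$ and $(B,Q')$ to a fixed $(B,Q'')$. The underlying maneuvers are identical: your ``switch membership onto a free point, transport it as a non-subset point, re-mark at the destination'' is exactly the paper's case analysis, so your routing step, though phrased as routine bookkeeping, is at essentially the same level of descriptive detail as the paper's avowedly informal presentation. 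What each approach buys: yours treats the two vertices symmetrically and replaces delicate local tracking by a uniform type-theoretic guarantee; the paper's homeomorphism-guided construction produces a walk tied to a \emph{chosen} homeomorphism, which meshes naturally with the later use (in the Conclusions) of homeomorphisms to compute walk labels and the coset representatives $(g_k,h_k)$.
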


\begin{proof}
The direct implication of this proposition is immediately clear. Namely, if $(A,P)$ and $(B,Q)$ are indeed connected, then there is a sequence of edges connecting them, each of which 
is labelled by a rank $n-1$ idempotent. Hence, $(B,Q)$ can be obtained from $(A,P)$ by performing a sequence of steps of the type (1) and (2) above. However, note that none of these
steps can change the type of a pair to which it is applied. Thus $(A,P)$ and $(B,Q)$ must be of the same type. Furthermore, none of them are stationary, for otherwise it is clear
that it would not be possible to apply any of the steps (1) or (2) to a stationary pair in a nontrivial fashion.

Conversely, assume that $(A,P)\sim(B,Q)$ and that none of these two pairs is stationary. Upon fixing a homeomorphism $(\phi,\psi):(A,P)\mapsto (B,Q)$, we are going to describe a
sequence of steps (1),(2) that turns $(A,P)$ into $(B,Q)$. Our first aim is to describe a process that, whenever $a\phi\neq a$, ``moves'' the point $a\in A$ to $a\phi$. We have 
three cases  to consider; throughout, we use the assumption that these pairs are non-stationary. For a non-image point $x\in [1,n]\setminus A$ we are going to use the term \emph{free}
if its current partition class is not a singleton (so both $(A,P)$ and $(B,Q)$ have at least one free point each).
\begin{itemize}
\item \emph{$a\phi$ is also in $A$.} Let $x$ be a free point in the current subset-partition structure. Remove it from its partition class and add it to the class containing $a\phi$
(this is a move of type (2)). Then apply a step of type (1) to remove $a\phi$ from the current subset and replace it by $x$. Now, $a\phi$ becomes a free point, so apply a step of 
type (2) to add it to the partition class containing $a$. Finally, apply (1) to remove $a$ from the subset and add $a\phi$. (Note that this makes $a$, at the moment, a free point.)
\item \emph{$a\phi\not\in A$ is a free point.} Remove $a\phi$ from its partition class and add it to the class containing $a$ (this is a step of type (2)). Then (by applying (1))
remove $a$ from the subset and add $a\phi$ to it. (Once again, this makes $a$ a free point.)
\item \emph{$a\phi\not\in A$ currently comprises a singleton partition class.} Take a free point $x$, remove it from its current partition class, and, by applying (2), add it to
$a\phi$, thus making it a class consisting of two free points. This creates a situation from the previous case, so proceed accordingly. 
\end{itemize}
Note that one such step creates a new pair $(A',P')$ (where $A'=(A\setminus\{a\})\cup\{x\}$ in the first case and $A'=(A\setminus\{a\})\cup\{a\phi\}$ in the other two), which is
of course still homeomorphic to $(B,Q)$, via $(\phi',\psi')$ such that 
$$
y\phi'=\left\{\begin{array}{ll}
a\phi & y=a\phi, \\
(a\phi)\phi & y=x, \\
y\phi & y\not\in\{a\phi,x\}
\end{array}\right.
$$
in the first case, and 
$$
y\phi'=\left\{\begin{array}{ll}
a\phi & y=a\phi, \\
y\phi & y\neq a\phi
\end{array}\right.
$$
in the other two. In any case, $\phi'$ has more fixed points than $\phi$. Therefore, either by employing an inductive argument, or by simply iterating the step described above
(applying it now to $(A',P')$ etc.), we arrive at the conclusion that $(A,P)$ can be transformed, by a series of applications of (1) and (2), into  a subset-partition pair
of the form $(B,Q')$ (where the partition $Q'$ is possibly different from $Q$, but induces the same partition on $B$ as $Q$ does).

So, now it remains to argue that we can use steps of type (2) (moving around points not belonging to $B$) in order to transform $Q'$ into $Q$. To be more precise, assume that 
$B=B_1\cup\dots\cup B_t$ is the partition induced on $B$ both by $Q$ and $Q'$ (so that both of the latter contain $r-t$ classes not intersecting $B$). Furthermore, let
$$
B_1\cup B_1',\dots,B_t\cup B_t',C_1',\dots,C_{r-t}'
$$
be the partition classes of $Q'$, while the classes of $Q$ are
$$
B_1\cup B_1'',\dots,B_t\cup B_t'',C_1,\dots,C_{r-t},
$$
with $B_k',B_k'',C_l,C_l'\subseteq [1,n]\setminus B$, $1\leq k\leq t$, $1\leq l\leq r-t$. To prove the required assertion, since all steps involved are reversible, we are going to 
show that both $(B,Q)$ and $(B,Q')$ can be transformed into a fixed pair $(B,Q'')$, where the classes of the partition $Q''$ are
$$
B_1\cup X,B_2,\dots,B_t,\{x_1\},\dots,\{x_{r-t}\},
$$
where $x_1,\dots,x_{r-t}$ are some arbitrary but fixed elements of $[1,n]\setminus B$ and $X=[1,n]\setminus(B\cup\{x_1,\dots,x_{r-t}\})$. We show this for $(B,Q)$, the proof for 
$(B,Q')$ being completely analogous. Now, some of the elements $x_k$, $1\leq k\leq r-t$, may already form singleton classes among $C_1,\dots,C_{r-t}$; without loss of generality 
(and upon renumbering, if necessary) we may assume that $C_k=\{x_k\}$ for all $k<s$ for some $s$. Other classes, not intersecting $B$, namely $C_s,\dots,C_{r-t}$ are either 
not singletons, or are singletons but do not contain any of $x_s,\dots,x_{r-t}$. This, is particular, means that all the elements from the latter list are free in $(B,Q)$. 
However, this very fact allows us to use (2) to remove them from their respective classes and put each $x_l$, $s\leq l\leq r-t$, into $C_l$ (transforming them into $C_l\cup\{x_l\}$). 
But then, at that moment, all members of $C_s,\dots,C_{r-t}$ become free, so all their elements can be sent to the class containing $B_1$ (leaving singleton classes $\{x_s\},\dots,
\{x_{r-t}\}$ behind). The same can be done with all elements of $B_2'',\dots,B_t''$ (as they are obviously free), so we are done.
\end{proof}

This result means that each stationary pair is an isolated vertex in $\mathcal{A}(\ol{D}_m,\ol{D}_r)$, and so we can immediately conclude that its vertex group is trivial. Other,
non-stationary pairs are classified into connected components according to their type.

\subsection{The degenerate cases (i.e.\ involving rank $n-1$)}

It is at this point that we are going explain in full detail the ``with few exceptions'' disclaimer made at the end of the introduction. These exceptions arise because we can
discard some of the pairs/vertices $(A,P)$ of contact graphs, as their vertex groups never appear in the course of computing the map $\theta$ (and thus deciding the word problem
and computing Green's relations in $\ig{\cE_{\T_n}}$); so, some of these ``superfluous'' vertex groups will not be computed here. 

Namely, whenever working with elements of $\ig{\cE_{\T_n}}$, we assume that they are given via some of their minimal r-factorisations (and, as discussed, we can always routinely 
extract at least one such factorisation from a given word over $\ol{E(\T_n)}$). So, a typical such element will be of the form 
$$
\mathbf{x} = (P_1,g_1,A_1)\dots(P_s,g_s,A_s),
$$
and it will be of the $\D$-fingerprint $(\ol{D}_{m_1},\dots,\ol{D}_{m_s})$. Such a setting entails that the product of any two consecutive factors above is a non-regular element
of $\ig{\cE_{\T_n}}$. Bearing in mind Lemma \ref{lem:2reg}, this means that for all pairs $(A_t,P_{t+1})$, $1\leq t<s$, we have that $A_t$ neither saturates $P_{t+1}$, nor is
separated by it. So, whenever we are asked to compute $(\cdot,\mathbf{x},\mathbf{y})\theta$, the process, as can be amply seen, will never involve a vertex group $W_{(A,P)}$
such that $A$ saturates $P$ or is separated by $P$. This motivates a definition: pairs $(A,P)$ with either of these two properties will be called \emph{regular}; otherwise,
they are \emph{non-regular}. 

Therefore, the conclusion is that the vertex groups of regular pairs $(A,P)$ are \emph{completely irrelevant} for the process of computing the map $\theta$; it is only
the vertex groups and their cosets of non-regular pairs that can possibly be interesting for us from the practical point of view. However, then we have the following result.

\begin{pro}\label{pro:deg}
If either $m=n-1$ of $r=n-1$ then a vertex $(A,P)$ in the graph $\mathcal{A}(\ol{D}_m,\ol{D}_r)$ is non-regular if only if it is stationary. Consequently, in such a case, its 
vertex group is trivial. 
\end{pro}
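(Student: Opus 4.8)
The plan is to unwind both definitions and then prove the two implications separately, noting that only the converse actually uses the hypothesis $m=n-1$ or $r=n-1$ (throughout I keep the standing assumption $m,r\le n-1$ of this section). Recall that $(A,P)$ is non-regular exactly when $A$ fails to saturate $P$ \emph{and} $P$ fails to separate $A$, i.e.\ some $P$-class is disjoint from $A$ while some $P$-class meets $A$ in at least two points; and $(A,P)$ is stationary exactly when every non-singleton $P$-class is contained in $A$ (equivalently, there are no free points).

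The implication stationary $\Ra$ non-regular I would prove for all $m,r\le n-1$ with no extra hypothesis, by contradiction. If a stationary pair were regular, then one of two things happens. If $P$ separated $A$, then any non-singleton class, being contained in $A$ by stationarity, would contribute at least two points of $A$ to a single class, contradicting separation; hence all classes are singletons and $r=n$, which is excluded. If instead $A$ saturated $P$, then each singleton class $\{x\}$ forces $x\in A$, while each non-singleton class lies in $A$ by stationarity, so $A=[1,n]$ and $m=n$, again excluded. This settles one direction.

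For the converse I would argue the contrapositive --- a non-stationary pair is regular --- and this is exactly where the degeneracy enters, through the rigidity of the extremal data. When $r=n-1$, the partition $P$ consists of a single two-element class $\{p,q\}$ together with $n-2$ singletons; non-stationarity forces $p$ or $q$ to lie outside $A$, so $\{p,q\}$ meets $A$ in at most one point and, the remaining classes being singletons, $P$ separates $A$. When $m=n-1$, the set $A$ omits exactly one point $c$; non-stationarity means the class $C_0$ containing $c$ has size at least $2$, so $C_0$ meets $A$, while every other class avoids $c$ and hence lies in $A$, so $A$ saturates $P$. In either case $(A,P)$ is regular, which completes the equivalence. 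The closing assertion is then immediate: by the preceding connectedness criterion a stationary pair is an isolated vertex of $\mathcal{A}(\ol{D}_m,\ol{D}_r)$, so the only closed walk based there is trivial and its vertex group is trivial.

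The proof is short once it is set up, so the main point is conceptual rather than computational: recognizing that in each degenerate case precisely one of the two regularity conditions is automatically available (separation when $r=n-1$, saturation when $m=n-1$). It is worth stressing that the hypothesis cannot be dropped --- for instance $A=\{1,2\}$, $P=\{\{1,2,3\},\{4\}\}$ in $\T_4$ yields a pair that is simultaneously non-regular and non-stationary --- so the genuine content of the proposition resides in this converse direction.
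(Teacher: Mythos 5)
Your proof is correct and follows essentially the same route as the paper's: in each degenerate case you unwind the extremal structure ($A$ omitting a single point when $m=n-1$, a single doubleton class when $r=n-1$) to identify non-regularity with stationarity, and triviality of the vertex group then follows, exactly as in the paper, from stationary pairs being isolated vertices. The only differences are organizational refinements — isolating the implication stationary $\Rightarrow$ non-regular as valid for all $m,r\le n-1$, and noting via a counterexample that the degeneracy hypothesis is genuinely needed for the converse — but the underlying argument is the same.
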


\begin{proof}
First let $m=n-1$. Then $A=[1,n]\setminus\{i\}$ for some $i$. So, the pair $(A,P)$ is regular unless $\{i\}$ is a singleton class in $P$. But this is precisely the case when
$(A,P)$ is stationary under the assumption $m=n-1$. Similarly, assume now that $r=n-1$. Then all classes of $P$ are singletons except one, which contains two elements, say 
$\{i,j\}$. Now the only way $(A,P)$ can be non-regular is that both $i$ and $j$ belong to $A$. But this is also precisely the condition that makes all the $P$-classes
not containing elements of $A$ singletons. 
\end{proof}

Hence, if either $m=n-1$ or $r=n-1$, the non-regular pairs are all isolated vertices of their corresponding contact graphs. On the other hand, when $m=r=n-1$, all regular pairs 
form a single connected component. Computing $W_{(A,P)}$ for regular pairs $(A,P)$ when one of $m,r$ is equal to $n-1$ would involve dealing with the $2\binom{n}{2}$ generators 
$f_{Q,B}$ (with $|Q|=|B|=n-1$ and $B\perp Q$) of the presentation from \cite{GR2} for the maximal subgroup of $\ol{D}_{n-1}$, delving into the combinatorial conditions which of 
these generators are equal to 1 according to this presentation (these are the only defining relations appearing in that presentation), and then performing tedious yet unnecessary 
computations of subgroups within groups of one of the forms $F_k\times F_k$, $F_k\times\mathbb{S}_r$, and $\mathbb{S}_m\times F_k$ for $k=\binom{n}{2}-1$, whereas, from the 
standpoint of (computational) applications in the required context, the previous proposition is all we need.

So, in the remainder or the paper we may safely assume that $m,r\leq n-2$, which eliminates the appearance of free groups. In that case, we however \emph{will} compute the
vertex groups $W_{(A,P)}$ even for the regular pairs $(A,P)$, for the simple reason that the regular case turns out to be not one bit different from the non-regular one;
said otherwise, (non-)regularity of the pair has no impact whatsoever on proving the general result.

\subsection{Vertex groups for $\mathcal{A}(\ol{D}_m,\ol{D}_r)$ when $m,r\leq n-2$}

We start by discussing the group labels associated with a general edge in $\mathcal{A}(\ol{D}_m,\ol{D}_r)$.

\begin{pro}\label{pro:lab}
Let $A,B$ be $m$-element subsets of $[1,n]$, and let $P,Q$ be partitions of $[1,n]$ into $r$ pieces such that there is an edge in $\mathcal{A}(\ol{D}_m,\ol{D}_r)$ directed from
$(A,P)$ to $(B,Q)$ and labelled by $e\in E$. Then this edge carries the group label $(\pi,\pi')$, where $\pi\in\mathbb{S}_m$ and $\pi'\in\mathbb{S}_r$ are permutations such that
$$
b_{i\pi} e = a_i
$$
holds for all $1\leq i\leq m$ (assuming that $a_1<\dots<a_m$ and $b_1<\dots<b_m$), and
$$
P_j e^{-1} = Q_{j\pi'}
$$
holds for all $1\leq j\leq r$ (assuming that $\min P_1<\dots<\min P_r$ and $\min Q_1<\dots<\min Q_r$).
\end{pro}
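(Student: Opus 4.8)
The plan is to compute the label $(a,b^{-1})$ of the edge $(A,P) \to (B,Q)$ directly from the formulas (\ref{eq:a}) and (\ref{eq:b}), identifying the resulting pair of group elements with the permutations $\pi \in \mathbb{S}_m$ and $\pi' \in \mathbb{S}_r$ described in the statement. Recall that for $\ig{\cE_{\T_n}}$ with rank $m, r \le n-2$, the maximal subgroups are the full symmetric groups $\mathbb{S}_m$ and $\mathbb{S}_r$, and the generators $f_{i\lambda}$ of these groups are precisely the labels $\lambda(P,A)$ computed in Example \ref{exa:Tn}. So the first step is to unwind what (\ref{eq:a}) and (\ref{eq:b}) say in this concrete setting: the label $a$ is a product of two such $\lambda$-permutations (one inverted), and similarly for $b$.

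Let me elaborate what I expect to happen. For the first coordinate, the edge exists because $\ol{e}\cdot P = Q$ (using the notation of Lemma \ref{lem:action}(2), whereby $\im e$ saturates $P$ and the $Q$-classes are the $e$-preimages of the $P$-classes). The formula (\ref{eq:a}) gives $a = \left(f^{(1)}_{i_0\lambda}\right)^{-1} f^{(1)}_{i_0\mu}$ for a suitable fixed point $i_0$ of the left action; here $\lambda, \mu$ are the $\L$-class indices, i.e.\ the images $A, B$. Since in $\T_n$ the generators are the $\lambda$-labels and the coefficient $c$ was shown in the preliminaries to be independent of the particular fixed point chosen, I would pick a convenient $i_0$ and simplify. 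My claim is that this product telescopes to the unique permutation $\pi'$ satisfying $P_j e^{-1} = Q_{j\pi'}$ under the stated min-based orderings of the classes; the verification amounts to tracking how the labels $\lambda(\cdot,\cdot)$ permute the indices when one passes from $P$ to $Q=\ol{e}\cdot P$. Dually, for the second coordinate, the edge condition $A = B\cdot\ol{e}$ means (by Lemma \ref{lem:action}(1)) that $\ker e$ separates $B$ and $A = Be$; the formula (\ref{eq:b}) then yields the permutation $\pi \in \mathbb{S}_m$ recording how $e$ sends the sorted image points $b_1 < \dots < b_m$ to the sorted image points $a_1 < \dots < a_m$, which is exactly the relation $b_{i\pi}e = a_i$.

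The key step throughout is to correctly match the combinatorial bookkeeping of Example \ref{exa:Tn} (the min-sorting of partition classes and the sorting of subset elements) against the algebraic definitions (\ref{eq:a}) and (\ref{eq:b}). Concretely, I would write $P = \{P_1, \dots, P_r\}$ and $Q = \{Q_1, \dots, Q_r\}$ with $\min P_1 < \dots < \min P_r$ and $\min Q_1 < \dots < \min Q_r$, and $A = \{a_1 < \dots < a_m\}$, $B = \{b_1 < \dots < b_m\}$, and then carefully compute each $f^{(1)}$ and $f^{(2)}$ label using the recipe for $\lambda(P,A)$. The main obstacle I anticipate is precisely this indexing: the fixed points $i_0$ (resp.\ $\lambda_0$) in (\ref{eq:a}) and (\ref{eq:b}) must be chosen so that both relevant $\H$-classes are groups, and one must verify that the product of two labels, each of which is itself a permutation extracted from a min-sorted cross-section, collapses to the clean permutation claimed. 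Because the coefficients $c, d$ are independent of the chosen fixed point (as recalled from \cite[Proposition 4.1]{YDG2} and \cite[Proposition 2.2]{DGR}), I expect I can choose these fixed points to make at least one of the two $\lambda$-factors trivial or easily computable, thereby reducing the product to a single permutation and reading off $\pi$ and $\pi'$ directly.
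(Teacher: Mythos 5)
Your overall strategy matches the paper's: evaluate the label formulas \eqref{eq:a} and \eqref{eq:b} concretely for $\T_n$, using the identification (from \cite{GR2}) of the generators $f_{P,A}^{(k)}$ with the labels $\lambda(P,A)$ of Example \ref{exa:Tn}. But your proposal crosses the two coordinates, and this is a genuine error, not a notational slip. Equation \eqref{eq:a}, with $\lambda=A$ and $\mu=B$, produces an element of $G_1=\mathbb{S}_m$ of the form $\lambda(P_0,A)^{-1}\lambda(P_0,B)$, where the admissible fixed point $i_0=P_0$ is a partition of rank $m$; a product of two permutations of $[1,m]$ cannot ``telescope'' to the permutation $\pi'\in\mathbb{S}_r$ of the $r$ classes of $P$, as you claim. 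It is \eqref{eq:a} that yields the subset permutation $\pi$ satisfying $b_{i\pi}e=a_i$, and \eqref{eq:b}, with $i=P$, $j=Q$ and an $r$-element fixed point $\lambda_0=A_0$, that yields $\pi'$ with $P_je^{-1}=Q_{j\pi'}$; you have the two exactly swapped, and correspondingly you attach the wrong edge condition to each formula (the condition $A=B\cdot\ol{e}$ governs the first coordinate, and $\ol{e}\cdot P=Q$ the second). Run as written, your plan fails on type grounds at the first step.

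Beyond the swap, the two points you defer are precisely where the proof's content lies, and your proposed shortcut does not work. First, the existence of admissible fixed points must be established, not just flagged: from $A=Be$ and idempotency of $e$ one deduces $A\subseteq\im e=\fix e$ and that the elements of $A$ and $B$ are matched through common $(\ker e)$-classes, which yields a rank-$m$ partition $P_0$ coarser than $\ker e$ and transversal to both $A$ and $B$; dually, since $\im e$ saturates $P$ and each image point of $e$ lying in $P_j$ also lies in $P_je^{-1}$, one obtains a joint transversal $A_0\subseteq\im e$ of $P$ and $Q$. Second, your hope of choosing the fixed point so that one $\lambda$-factor becomes trivial is unjustified: $\lambda(P_0,A)=\id$ forces the min-ordering of the $P_0$-classes to agree with the ordering of $A$, and the constraints on $P_0$ (coarser than $\ker e$, transversal to both $A$ and $B$) generally leave no such choice available. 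The paper needs no such reduction: keeping both factors $\sigma=\lambda(P_0,A)$ and $\tau=\lambda(P_0,B)$, one has $a_{s\sigma},b_{s\tau}\in P_0^{(s)}$, and since $e$ restricted to $B$ maps each element into the element of $A$ in the same $(\ker e)$-class (hence the same $P_0$-class), it follows that $b_{s\tau}e=a_{s\sigma}$; re-indexing $i=s\sigma$ gives $b_{i\pi}e=a_i$ for $\pi=\sigma^{-1}\tau$, with the computation for $\pi'$ entirely dual. So the frame of your plan is sound, but the coordinate identification must be corrected, and the two steps you name as obstacles are exactly the ones requiring the arguments above.
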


\begin{proof}
Let us begin by recalling that, in the general case, the label of an edge within the contact graph of two regular $\D$-classes corresponding to $e\in E$ is $(a,b^{-1})$, where $a,b$
are given by equations \eqref{eq:a} and \eqref{eq:b}. Here, $f_{i\lambda}^{(k)}$, $k=1,2$, are the generators of the maximal subgroups in the two $\D$-classes involved, appearing
in the presentation for these groups as described in \cite[Theorem 5]{GR1}. In the concrete case, for the biorder of $\T_n$, these generators will be of the form $f_{P,A}^{(k)}$,
where for $k=1$ the partition $P$ and the subset $A$ of $[1,n]$ are of cardinality $m$ for $k=1$, and of cardinality $r$ for $k=2$ (in both cases we must have $A\perp P$). As it is
explained at the beginning of \cite[Section 3]{GR2} (outlining the plan of the proof of the main result of that paper), in the end, when the presentation in question (applied to $\T_n$)
is sorted out -- and identified to be a presentation for a symmetric group -- the generator $f_{P,A}^{(k)}$ will represent the label $\lambda(P,A)$, see Example \ref{exa:Tn}, a
permutation in $\mathbb{S}_m$ or $\mathbb{S}_r$, respectively. This fact will be crucially taken into account in the remainder of the proof.

Concerning $\pi$, the first component of the label of the considered edge, let us begin by noting that if $1\leq i,i'\leq m$ are such that $b_{i'}e=a_i$, then $a_ie=a_i$. Therefore,
$a_i$ and $b_{i'}$ belong to the same $(\ker e)$-class, and, furthermore, each $a_i$ is a fixed point of $e$, so $A$ is separated by $\ker e$. These observations suffice to justify
the existence of a partition $P_0$ of rank $m$, coarser than $\ker e$, separating $A$ (and thus $B$). Bearing in mind \eqref{eq:a} and the previous remarks, we now have
$$
\pi = \left(f_{P_0,A}^{(1)}\right)^{-1}f_{P_0,B}^{(1)} = \lambda(P_0,A)^{-1}\lambda(P_0,B).
$$
Assume now that the classes of $P_0$ are $P_0^{(s)}$, $1\leq s\leq m$, so that $\min P_0^{(1)}<\dots<\min P_0^{(m)}$. If we write $\sigma=\lambda(P_0,A)$ and $\tau=\lambda(P_0,B)$,
then $a_{s\sigma},b_{s\tau}\in P_0^{(s)}$ holds for all $1\leq s\leq m$. We have already argued that the action of $e$, restricted to $B$, maps each element of $B$ into the elements
of $A$ in the same $(\ker e)$-class (and thus in the same $P_0$-class). The conclusion is that we have $b_{s\tau}e=a_{s\sigma}$ for all $1\leq s\leq m$. Upon re-indexing $i=s\sigma$,
we arrive at 
$$
b_{i\pi}e = b_{i\sigma^{-1}\tau}e = a_i
$$
holding for all $1\leq i\leq m$ , which is precisely what we wanted to show.

We proceed by discussing the second label, $\pi'$. Now, we have $\ol{e}\cdot P=Q$, and so $\im e$ saturates $P$ by Lemma \ref{lem:action}. If $l$ is an image point of $e$ such that
$l\in P_j$ (for some $1\leq j\leq r$), then $le=l$ and so $l\in P_je^{-1}$ as well, the latter inverse image being equal to $Q_{j'}$ for some $1\leq j'\leq r$. It follows from these
considerations that there is a subset $A_0\subseteq \im e$ which is a joint transversal for both $P$ and $Q$. By \eqref{eq:b},
$$
\pi' = \left(f_{Q,A_0}^{(2)}\left(f_{P,A_0}^{(2)}\right)^{-1}\right)^{-1} = \lambda(P,A_0)\lambda(Q,A_0)^{-1}.
$$
If we write $A_0=\{a_1<\dots<a_r\}$, $\sigma=\lambda(P,A_0)$, and $\tau=\lambda(Q,A_0)$, then we have $a_t\in P_{t\sigma^{-1}}\cap Q_{t\tau^{-1}}$ for all $1\leq t\leq r$. Similarly
as in the previous paragraph, corresponding $P$-classes and $Q$-classes (with respect to the action of $\ol{e}$) are identified by containing the same element of their joint transversal
$A_0$. Hence, we have $\ol{e}\cdot P_{t\sigma^{-1}} = P_{t\sigma^{-1}}e^{-1} = Q_{t\tau^{-1}}$ for all $1\leq t\leq r$. Again, by re-indexing $j=t\sigma^{-1}$, we obtain
$$
P_j e^{-1} = Q_{j\sigma\tau^{-1}} = Q_{j\pi'},
$$
just as required.
\end{proof}

Clearly, whenever we are presented with a pair of permutations $(\pi,\pi')\in\mathbb{S}_m\times\mathbb{S}_r$ and two pairs $(A,P),(B,Q)$ such that $|A|=|B|=m$ and $|P|=|Q|=r$, we
can construct two bijections $\phi_\pi:A\to B$ and $\psi_{\pi'}$ by defining $a_i\phi_\pi = b_{i\pi}$ and $P_j\psi_{\pi'} = Q_{j\pi'}$. (So, bearing in mind the previous proposition,
it that context $\phi_\pi$ will be the inverse of the bijection $e|_B$, while $\psi_{\pi'}$ coincides with the action of the inverse image of $e$.) However, the most interesting 
situation for us is when $(\pi,\pi')$ gives rise to a homeomorphism $(\phi_\pi,\psi_{\pi'}):(A,P)\sim(B,Q)$ (and it is straightforward to see that any homeomorphism arises in this
way, from a pair of permutations). By far the most important such situation for us is described in the following lemma.

\begin{lem}
If $(\pi,\pi')\in\mathbb{S}_m\times\mathbb{S}_r$ labels an edge $(A,P)\longrightarrow(B,Q)$ in $\mathcal{A}(\ol{D}_m,\ol{D}_r)$, then $(\phi_\pi,\psi_{\pi'})$ is a homeomorphism between
the pairs involved. Consequently, the same conclusion holds for the group label of any walk in $\mathcal{A}(\ol{D}_m,\ol{D}_r)$ and its endpoints.
\end{lem}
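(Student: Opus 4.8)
The plan is to prove the lemma in two stages, corresponding to its two sentences: first the single-edge case, then the extension to arbitrary walks by composing homeomorphisms.

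For the single-edge statement, I would lean directly on Proposition \ref{pro:lab}, which tells me exactly what the label $(\pi,\pi')$ encodes. The defining equations there are $b_{i\pi}e=a_i$ for all $1\le i\le m$ and $P_je^{-1}=Q_{j\pi'}$ for all $1\le j\le r$. By the definitions preceding this lemma, $\phi_\pi:A\to B$ is given by $a_i\phi_\pi=b_{i\pi}$ and $\psi_{\pi'}:P\to Q$ by $P_j\psi_{\pi'}=Q_{j\pi'}$; both are bijections since $\pi,\pi'$ are permutations. To verify these constitute a homeomorphism, I must check the type-preservation condition stated in the definition: for all $i,j$, we have $a_i\in P_j$ if and only if $a_i\phi_\pi\in P_j\psi_{\pi'}$, i.e. $b_{i\pi}\in Q_{j\pi'}$. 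The key idea is that the idempotent $e$ mediates between the two structures: since $b_{i\pi}e=a_i$ and $e$ acts on the partition classes by $P_je^{-1}=Q_{j\pi'}$, membership is transported faithfully. Concretely, $a_i\in P_j$ forces $a_i$ to be an image point of $e$ lying in $P_j$ (as established in the proof of Proposition \ref{pro:lab}, each $a_i$ is fixed by $e$), and then $b_{i\pi}$, being a preimage of $a_i$ under $e$, lies in $P_je^{-1}=Q_{j\pi'}$. The converse runs the same way since $e$ collapses each $Q$-class into the corresponding $P$-class. This establishes the equivalence, hence the homeomorphism.

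For the second sentence, I would proceed by induction on the length of the walk, using the fact that the group label of a walk is, by convention, the product of the labels of its constituent edges (with labels of reverse-traversed edges inverted). The base case of length one is exactly the single-edge statement just proved. For the inductive step, suppose a walk from $(A,P)$ to $(C,R)$ decomposes as a walk from $(A,P)$ to $(B,Q)$ with label $(\pi,\pi')$ followed by a single edge from $(B,Q)$ to $(C,R)$ with label $(\sigma,\sigma')$; by induction $(\phi_\pi,\psi_{\pi'})$ is a homeomorphism $(A,P)\sim(B,Q)$ and by the base case $(\phi_\sigma,\psi_{\sigma'})$ is a homeomorphism $(B,Q)\sim(C,R)$. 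The composite label is $(\pi\sigma,\pi'\sigma')$, and the associated maps $\phi_{\pi\sigma},\psi_{\pi'\sigma'}$ are readily seen to be the compositions $\phi_\pi\phi_\sigma$ and $\psi_{\pi'}\psi_{\sigma'}$, from the definition $a_i\phi_{\pi\sigma}=c_{i\pi\sigma}=(b_{i\pi})\phi_\sigma=(a_i\phi_\pi)\phi_\sigma$ and analogously for the partition maps. Since a composition of homeomorphisms is a homeomorphism (type is preserved along each step, hence along the composite), the conclusion follows.

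The main obstacle I anticipate is bookkeeping rather than conceptual difficulty: I must be careful with the direction conventions for edge traversal and the inversion of labels when an edge is traversed backwards, ensuring that the product-of-labels rule for walks is applied consistently so that the composition $\phi_{\pi\sigma}=\phi_\pi\phi_\sigma$ genuinely holds (and not, say, in reversed order). A secondary delicate point in the single-edge argument is cleanly justifying that membership is preserved in \emph{both} directions of the biconditional; the forward direction uses that $e$ fixes the points of $A$, while the reverse direction uses that $e$ maps each $Q$-class \emph{onto} the corresponding $P$-class, so I should make sure the surjectivity/injectivity of $e$ restricted to the relevant transversals is invoked correctly. Neither of these is a genuine difficulty, but both require stating the conventions precisely to avoid an off-by-an-inverse error.
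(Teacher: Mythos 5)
Your proposal is correct and follows essentially the same route as the paper's (very brief) proof: the single-edge case is exactly the ``routine'' verification of $a_i\in P_j \Leftrightarrow b_{i\pi}\in Q_{j\pi'}$ via the defining relations $b_{i\pi}e=a_i$ and $P_je^{-1}=Q_{j\pi'}$ from Proposition \ref{pro:lab}, and your induction on walk length is just an unfolding of the paper's appeal to multiplicativity, $\phi_{\pi_1\pi_2}=\phi_{\pi_1}\phi_{\pi_2}$ and $\psi_{\pi'_1\pi'_2}=\psi_{\pi'_1}\psi_{\pi'_2}$. Indeed the biconditional is even more immediate than you suggest: $b_{i\pi}\in Q_{j\pi'}=P_je^{-1}$ holds if and only if $b_{i\pi}e=a_i\in P_j$, so neither the fixed-point information nor any surjectivity of $e$ on transversals needs to be invoked.
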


\begin{proof}
Proving this lemma amounts to showing that the following equivalence holds ($1\leq i\leq m$, $1\leq j\leq r$):
$$
a_i\in P_j\quad\text{if and only if}\quad b_{i\pi}\in Q_{j\pi'}.
$$
However, this is now routine, bearing in mind the previous proposition. Also, the second part of the lemma follows easily from the fact that the group label along any walk is the
product of labels of its edges, as well as the fact that $\phi_{\pi_1\pi_2}=\phi_{\pi_1}\phi_{\pi_2}$ and $\psi_{\pi'_1\pi'_2}=\psi_{\pi'_1}\psi_{\pi'_2}$ holds for any $\pi_1,\pi_2
\in\mathbb{S}_m$, $\pi_1',\pi_2'\in\mathbb{S}_r$.
\end{proof}

In particular, in the case when $(B,Q)=(A,P)$, a homeomorphism $(\phi,\psi)$ of $(A,P)$ to itself is called an \emph{auto-homeomorphism} of $(A,P)$. A direct consequence of the 
previous lemma reads as follows.

\begin{cor}\label{cor:auto}
The group label of any loop (closed walk) based at $(A,P)$ gives rise to an auto-homeomorphism of $(A,P)$.
\end{cor}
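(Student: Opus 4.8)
The plan is to read the corollary as a direct specialization of the preceding lemma to the case of a closed walk, so the proof should be essentially a two-line deduction. First I would recall the statement of the lemma: if a walk in $\mathcal{A}(\ol{D}_m,\ol{D}_r)$ has group label $(\pi,\pi')\in\mathbb{S}_m\times\mathbb{S}_r$ and runs from $(A,P)$ to $(B,Q)$, then the induced pair of bijections $(\phi_\pi,\psi_{\pi'})$ is a homeomorphism between $(A,P)$ and $(B,Q)$. A loop (closed walk) based at $(A,P)$ is simply the special case where the terminal vertex coincides with the initial one, i.e.\ $(B,Q)=(A,P)$. Hence $(\phi_\pi,\psi_{\pi'})$ is a homeomorphism from $(A,P)$ to itself, which is precisely the definition of an auto-homeomorphism just given before the corollary.

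The key steps, in order, are therefore: (i) invoke the lemma for the closed walk, obtaining that the group label $(\pi,\pi')$ yields a homeomorphism $(\phi_\pi,\psi_{\pi'})$ between the endpoints; (ii) observe that for a closed walk the two endpoints are the same pair $(A,P)$; and (iii) appeal to the definition of an auto-homeomorphism as a self-homeomorphism of $(A,P)$ to conclude. I would also note in passing that this closure under products is exactly what makes the second assertion of the lemma applicable, since a closed walk is still a walk and its label is the product of the edge labels $(\pi_k,\pi_k')$, with $\phi_{\pi_1\pi_2}=\phi_{\pi_1}\phi_{\pi_2}$ and $\psi_{\pi_1'\pi_2'}=\psi_{\pi_1'}\psi_{\pi_2'}$, so no further bookkeeping is needed.

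There is no genuine obstacle here; the corollary is a bookkeeping consequence of the lemma, and the only thing to be careful about is matching the terminology. Specifically, I want to make sure that the phrase ``auto-homeomorphism of $(A,P)$'' in the corollary aligns with the definition stated just above it (a homeomorphism of $(A,P)$ to itself), and that the ``group label of a loop'' is understood in the same sense as the ``group label of any walk'' in the lemma, namely the product of the edge labels traversed. Once that identification is made explicit, the result follows immediately, so the proof is little more than ``This is the case $(B,Q)=(A,P)$ of the previous lemma.''

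\begin{proof}
This is the special case $(B,Q)=(A,P)$ of the previous lemma. Indeed, a loop based at $(A,P)$ is a closed walk, hence in particular a walk in $\mathcal{A}(\ol{D}_m,\ol{D}_r)$ whose initial and terminal vertices both equal $(A,P)$. If $(\pi,\pi')\in\mathbb{S}_m\times\mathbb{S}_r$ is the group label of this walk (the product of the labels of the edges traversed along it), then by the lemma $(\phi_\pi,\psi_{\pi'})$ is a homeomorphism between the endpoints of the walk. Since both endpoints coincide with $(A,P)$, the pair $(\phi_\pi,\psi_{\pi'})$ is a homeomorphism of $(A,P)$ to itself, which is by definition an auto-homeomorphism of $(A,P)$.
\end{proof}
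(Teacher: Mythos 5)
Your proof is correct and takes exactly the route the paper intends: the paper states the corollary as ``a direct consequence of the previous lemma,'' which is precisely your specialization of that lemma to a closed walk with $(B,Q)=(A,P)$, using that the walk label is the product of edge labels. Nothing is missing.
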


Formulated in a descriptive way, it is pretty clear what an auto-homeomorphism of a pair $(A,P)$ does: it permutes the partition classes containing the same number of elements of $A$,
and then establishes bijections between elements of $A$ belonging to the corresponding partition classes. If the auto-homeomorphism in question arose from the pair
$(\pi,\pi')\in\mathbb{S}_m\times\mathbb{S}_r$, then this first permutation is completely determined by $\pi'$: it is only subject to the restriction that we must have $|P_j|=|P_{j\pi'}|$
for all $1\leq j\leq r$. Then, to choose $\pi$ (which is in fact the totality of the bijections between the intersections of $A$ with the classes of $P$), we need to comply with
the condition that $a_{i\pi}\in P_{j\pi'}$ whenever $a_i\in P_j$, for all $1\leq i\leq m$ and $1\leq j\leq r$. So, basically, $\pi$ is a permutation of $A$ preserving a partition 
that is induced on it by $P$. Now let $\AHom(A,P)$ denote the subgroup of $\mathbb{S}_m\times\mathbb{S}_r$ consisting of all pairs $(\pi,\pi')$ inducing an auto-homeomorphism of
$(A,P)$ in the described way (it is routine to show that such pairs indeed form a group). This leads us to the concrete description of this permutation group.

\begin{pro}\label{pro:ahom}
Let $A$ be an $m$-element subset of $[1,n]$, and let $P$ be a partition of $[1,n]$ into $r$ classes, such that $m_s$, $1\leq s\leq k$, denote the distinct sizes of non-empty intersections
$A\cap P_j$, $1\leq j\leq r$, $\mu_s$, $1\leq s\leq k$, denotes the number of these intersections of size $m_s$, and $\nu$ denotes the number of empty intersections. 
\begin{itemize}
\item[(i)] The first projection $\Gamma_1$ of $\AHom(A,P)$ (that is, the range of first components $\pi$) is isomorphic to the direct product of wreath products
$$
(\mathbb{S}_{m_1}\wr \mathbb{S}_{\mu_1})\times \dots \times (\mathbb{S}_{m_k}\wr \mathbb{S}_{\mu_k}).
$$
\item[(ii)] The second projection $\Gamma_2$ of $\AHom(A,P)$ (i.e.\ the range of second components $\pi'$) is isomorphic to the direct product
$$
\mathbb{S}_{\mu_1}\times \dots \times \mathbb{S}_{\mu_k}\times \mathbb{S}_\nu.
$$
\item[(iii)] For $\pi\in\Gamma_1$, let $\ol\pi$ be the permutation on the set 
$$J_{(A,P)}=\{j\in[1,r]:\ A\cap P_j\neq\es\}$$ 
uniquely determined by $\pi$ by $j\ol\pi=j'$ if and only if $a_{i\pi}\in P_{j'}$ for some (and thus any) $i\in[1,m]$ such that $a_i\in P_j$. Then $(\pi,\pi')\in\AHom(A,P)$ if and only if 
$$\pi'=\ol\pi\oplus\pi'',$$ 
where $\pi''$ is any permutation of the set $[1,r]\setminus J_{(A,P)}$ (describing the part of $\pi'$ corresponding to the permutation of $P$-classes not intersecting $A$).
\end{itemize}
\end{pro}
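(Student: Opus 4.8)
The plan is to prove part (iii) first, since it gives a complete description of the pairs $(\pi,\pi')$ lying in $\AHom(A,P)$, and then to obtain (i) and (ii) as the two coordinate projections of the group so described. Recall that $(\pi,\pi')\in\AHom(A,P)$ means precisely that
$$
a_i\in P_j \quad\Lra\quad a_{i\pi}\in P_{j\pi'}
$$
holds for all $1\le i\le m$ and $1\le j\le r$. For (iii), I would first observe that this equivalence forces $\pi'$ to preserve intersection sizes, $|A\cap P_j|=|A\cap P_{j\pi'}|$; in particular $\pi'$ maps $J_{(A,P)}$ (the indices $j$ with $A\cap P_j\neq\es$) onto itself and maps its complement onto itself. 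For $j\in J_{(A,P)}$, choosing any $i$ with $a_i\in P_j$ gives $a_{i\pi}\in P_{j\pi'}$, which by the very definition of $\ol\pi$ says $j\pi'=j\ol\pi$; hence $\pi'$ restricts to $\ol\pi$ on $J_{(A,P)}$. On the complement there are no points of $A$, so the displayed equivalence imposes nothing there, and conversely any permutation $\pi''$ of those $\nu$ classes keeps all the equivalences valid. This is exactly $\pi'=\ol\pi\oplus\pi''$ with $\pi''$ arbitrary, yielding (iii).

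For (i), the first projection $\Gamma_1$ consists of those $\pi$ admitting some compatible $\pi'$; by (iii) this amounts to $\ol\pi$ being well defined, i.e.\ to $\pi$ carrying each cell $A\cap P_j$ ($j\in J_{(A,P)}$) bijectively onto another cell of the same cardinality. Thus $\Gamma_1$ is the full stabiliser in $\mathbb{S}_m$ (acting on $A$) of the partition of $A$ into its cells. Sorting the cells by size, for each value $m_s$ there are $\mu_s$ cells, and an element of $\Gamma_1$ may permute these $\mu_s$ cells arbitrarily while independently bijecting the $m_s$ points inside each one; this is precisely the imprimitive action of $\mathbb{S}_{m_s}\wr\mathbb{S}_{\mu_s}$. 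Cells of different sizes cannot be interchanged and act on disjoint portions of $A$, so the factors commute and $\Gamma_1\cong\prod_{s=1}^k(\mathbb{S}_{m_s}\wr\mathbb{S}_{\mu_s})$.

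For (ii), the second projection $\Gamma_2$ consists of those $\pi'$ admitting some compatible $\pi$. By (iii), $\pi'$ qualifies if and only if its restriction to $J_{(A,P)}$ equals $\ol\pi$ for some $\pi\in\Gamma_1$, while its restriction to the complement is unconstrained. Since the map $\pi\mapsto\ol\pi$ surjects onto the size-preserving permutations of $J_{(A,P)}$ (given any such permutation, assemble order-preserving bijections between equal-sized cells to manufacture a preimage $\pi$), the $J_{(A,P)}$-part of $\pi'$ ranges over $\prod_s\mathbb{S}_{\mu_s}$, permuting the $\mu_s$ cells of each size $m_s$ among themselves, whereas its part on the $\nu$ classes missing $A$ ranges freely over $\mathbb{S}_\nu$. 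As these coordinates are independent, $\Gamma_2\cong\mathbb{S}_{\mu_1}\times\dots\times\mathbb{S}_{\mu_k}\times\mathbb{S}_\nu$.

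I expect the only real friction to be the wreath-product identification in (i): one must verify that \emph{permute like-sized cells} together with \emph{biject within a cell} assemble into exactly $\mathbb{S}_{m_s}\wr\mathbb{S}_{\mu_s}$, and not into a proper subgroup or a nonsplit extension, and also that the ordering conventions $a_1<\dots<a_m$ and $\min P_1<\dots<\min P_r$ used to encode $\pi$ and $\pi'$ as honest permutations introduce no spurious constraints. Once cell-preservation has been isolated through (iii), however, this reduces to the standard description of the stabiliser of a set partition inside a symmetric group.
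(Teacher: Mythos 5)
Your proposal is correct and follows essentially the same route as the paper's own proof: both first establish the characterisation in (iii) from the compatibility condition $a_i\in P_j \Leftrightarrow a_{i\pi}\in P_{j\pi'}$ and then read off (i) and (ii) as the two projections, with the preimage needed in (ii) manufactured exactly as in the paper from monotone (order-preserving) bijections between equal-sized cells. The only cosmetic difference is that for the wreath-product identification in (i) the paper simply cites the known description of partition-stabilisers in symmetric groups (Ara\'ujo--Bentz--Mitchell--Schneider, Lemma 2.1), whereas you verify it directly.
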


\begin{proof}
First of all, if $(\pi,\pi')\in\AHom(A,P)$ then $\phi_\pi$ preserves the partition that $P$ induces on $A$: indeed, $a_i,a_{i'}\in P_j$ for some $1\leq i,i'\leq m$, $1\leq j\leq r$,
if and only if $a_{i\pi},a_{i'\pi}\in P_{j\pi'}$. Furthermore, by the compatibility condition just invoked, we must have $\pi'=\ol\pi\oplus\pi''$ for some permutation $\pi''$ 
of the set $[1,r]\setminus J_{(A,P)}$. Conversely, if $\pi\in\mathbb{S}_m$ is any permutation inducing a $P$-preserving permutation of $A$, then it is straightforward 
to see that 
$$(\pi,\ol\pi\oplus\pi'')\in\AHom(A,P)$$ 
for any permutation $\pi''$ of the set $[1,r]\setminus J_{(A,P)}$. Note that these remarks already show (iii). The statement (i)
also follows immediately, as the structure of the group of partition-preserving permutations is well known, see \cite[Lemma 2.1]{ABMS}.

Also, we already know that any $\pi'\in\Gamma_2$ permutes the partition classes of $P$ whose intersection with $A$ have the same cardinality. Conversely, if $\pi'$ is any such
permutation, it is easy to construct a permutation $\pi\in\Gamma_1$ such that $\pi'=\ol\pi\oplus\pi''$, where $\pi''$ is the restriction of $\pi'$ to the classes not containing
any element of $A$ (for example, let $\pi$ be the union of all monotone bijections $A\cap P_j \mapsto A\cap P_{j\pi'}$, $1\leq j\leq r$). Hence, (ii) follows.
\end{proof}

\begin{rmk}
In the notation introduced in Subsection \ref{subsec:theta} (and then crucially used in Subsection \ref{subsec:comp}), the statement (iii) from the previous proposition
can be expressed as follows. If $\rho=\AHom(A,P)\subseteq \mathbb{S}_m\times\mathbb{S}_r$ is considered as a relation, then 
$$
\pi\varphi_\rho = \{\ol\pi\oplus\sigma:\ \sigma\in\mathbb{S}_{[1,r]\setminus J_{(A,P)}}\}.
$$ 
It is immediately seen that the latter set is just a coset of $\Stab(J_{(A,P)})$, the pointwise stabiliser of $J_{(A,P)}$ (which is isomorphic to $\mathbb{S}_\nu$), corresponding 
e.g.\ to $\ol\pi\oplus\id_{[1,r]\setminus J_{(A,P)}}$.
\end{rmk}

Notice that the previous Corollary \ref{cor:auto} can be now reformulated in the following way.

\begin{lem}
For any vertex $(A,P)$ of the graph $\mathcal{A}(\ol{D}_m,\ol{D}_r)$ we have $W_{(A,P)}\leq\AHom(A,P)$.
\end{lem}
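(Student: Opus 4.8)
The plan is to observe that this lemma is an essentially immediate consequence of Corollary \ref{cor:auto} together with the definitions of the two groups involved. First I would recall that, by its very definition, the vertex group $W_{(A,P)}$ consists precisely of the group labels $(\pi,\pi')$ of all closed walks based at the vertex $(A,P)$ of $\mathcal{A}(\ol{D}_m,\ol{D}_r)$. Since we are working in the regime $m,r\leq n-2$, the maximal subgroups $G_1,G_2$ attached to $\ol{D}_m$ and $\ol{D}_r$ are the symmetric groups $\mathbb{S}_m$ and $\mathbb{S}_r$, so these labels really are pairs $(\pi,\pi')\in\mathbb{S}_m\times\mathbb{S}_r$, and $W_{(A,P)}$ lives inside the same ambient group $\mathbb{S}_m\times\mathbb{S}_r$ as $\AHom(A,P)$.

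Next I would invoke Corollary \ref{cor:auto}, which asserts that the group label of any closed walk based at $(A,P)$ gives rise to an auto-homeomorphism of $(A,P)$. Reading this against the definition of $\AHom(A,P)$ as the set of all pairs $(\pi,\pi')\in\mathbb{S}_m\times\mathbb{S}_r$ that induce an auto-homeomorphism of $(A,P)$, it says exactly that every label $(\pi,\pi')\in W_{(A,P)}$ satisfies $(\pi,\pi')\in\AHom(A,P)$. This yields the set-theoretic inclusion $W_{(A,P)}\subseteq\AHom(A,P)$ directly.

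Finally, to record this as the stated inclusion of \emph{subgroups}, I would note that $W_{(A,P)}$ is already known to be a subgroup of $\mathbb{S}_m\times\mathbb{S}_r$ (from the discussion following the definition of the vertex group, via \cite[Lemma 3.2]{Do2}), while $\AHom(A,P)$ is a subgroup by Proposition \ref{pro:ahom}; hence the containment of underlying sets is automatically the containment $W_{(A,P)}\leq\AHom(A,P)$. There is no genuine obstacle at this point: the entire substance of the argument has already been discharged in proving Corollary \ref{cor:auto} (and, before it, the homeomorphism lemma for edge labels, which propagates to walk labels by multiplicativity of $\phi_\pi$ and $\psi_{\pi'}$). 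The present lemma is therefore purely a repackaging of that corollary in the language of the groups $W_{(A,P)}$ and $\AHom(A,P)$, and the proof should be no more than a sentence or two making this identification explicit.
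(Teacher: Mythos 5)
Your proposal is correct and matches the paper exactly: the paper gives no separate proof for this lemma, introducing it with the remark that Corollary \ref{cor:auto} ``can be now reformulated in the following way,'' which is precisely your identification of $W_{(A,P)}$ (the labels of closed walks based at $(A,P)$, a subgroup of $\mathbb{S}_m\times\mathbb{S}_r$) as a subset of $\AHom(A,P)$ (the pairs inducing auto-homeomorphisms). Your added bookkeeping about both being subgroups of the same ambient group is a harmless elaboration of the same one-line argument.
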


However, our aim is to prove that, unless $(A,P)$ is a stationary pair, equality holds in the previous lemma. This is actually the principal result of this paper.

\begin{thm}\label{thm:main}
If the vertex $(A,P)$ is not stationary in $\mathcal{A}(\ol{D}_m,\ol{D}_r)$, then its vertex group $W_{(A,P)}$ coincides with $\AHom(A,P)$, the auto-homeomorphism group of $(A,P)$.
Otherwise, $W_{(A,P)}$ is trivial.
\end{thm}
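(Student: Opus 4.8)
The plan is to prove the two inclusions separately, the stationary case being already disposed of by Proposition~\ref{pro:deg} (and the surrounding discussion), so assume throughout that $(A,P)$ is non-stationary. One inclusion, $W_{(A,P)}\le\AHom(A,P)$, has already been established in the lemma immediately preceding the theorem via Corollary~\ref{cor:auto}: every closed walk based at $(A,P)$ carries a group label inducing an auto-homeomorphism. So the entire content of the theorem is the reverse inclusion $\AHom(A,P)\le W_{(A,P)}$: I must show that every auto-homeomorphism of $(A,P)$ is actually \emph{realised} as the label of some closed walk based at $(A,P)$ using only edges labelled by rank $n-1$ idempotents $\eps_{ij}$.

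The key observation that makes this tractable is that $\AHom(A,P)$ is generated by a small, explicit set of elements, and it suffices to realise each generator by a closed walk (since $W_{(A,P)}$ is a group, closed under walk-concatenation and reversal). By Proposition~\ref{pro:ahom}, $\Gamma_1\cong (\mathbb{S}_{m_1}\wr\mathbb{S}_{\mu_1})\times\dots\times(\mathbb{S}_{m_k}\wr\mathbb{S}_{\mu_k})$ and the second component $\pi'=\ol\pi\oplus\pi''$ is almost determined by $\pi$, with $\pi''$ free on the $P$-classes disjoint from $A$. Thus a natural generating set consists of: (1) a transposition of two elements of $A$ lying in the same $P$-class (generating the $\mathbb{S}_{m_s}$ base groups of the wreath products); (2) a ``swap'' interchanging two $P$-classes of equal $A$-intersection size, carrying their $A$-elements along (generating the $\mathbb{S}_{\mu_s}$ top groups); and (3) a transposition of two $A$-free $P$-classes (generating the free factor $\mathbb{S}_\nu$ recorded in $\pi''$). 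For each generator I plan to write down an explicit closed walk realising it, using the combinatorial moves of type (1) and (2) catalogued in Subsection~\ref{subsec:conn}. Here the non-stationarity hypothesis is exactly what guarantees the existence of a \emph{free point} to serve as temporary storage, precisely as exploited in the proof of the connectedness proposition; indeed those very ``move a point out, swap it in, move it back'' manoeuvres are the prototype for the walks I need.

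The delicate point, and the main obstacle, is bookkeeping the \emph{group label} of each such walk rather than merely its combinatorial effect. Proposition~\ref{pro:lab} tells me exactly how to read the pair $(\pi,\pi')\in\mathbb{S}_m\times\mathbb{S}_r$ off each edge labelled $\eps_{ij}$ (via $b_{i\pi}e=a_i$ and $P_je^{-1}=Q_{j\pi'}$), and I must verify that the accumulated product over my chosen closed walk is precisely the intended generator of $\AHom(A,P)$, with \emph{no spurious extra permutation} left over. The danger is that the auxiliary free-point manoeuvres, which are needed combinatorially, might contribute unwanted factors to the label; I expect to show these auxiliary segments either cancel in pairs (by traversing them forward and then backward) or act trivially on the relevant indices. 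Concretely, to realise a base-group transposition of $a,a'\in A\cap P_j$ I would route a free point $x$ into $P_j$, use it to shuttle $a$ and $a'$ past one another, and return $x$ to its origin; the forward-and-reverse structure forces the net label to be exactly the transposition $(a\,a')$ on the first coordinate and the identity on the second. Once each of the three generator types is realised with the correct label, the two inclusions combine to give $W_{(A,P)}=\AHom(A,P)$, completing the proof.
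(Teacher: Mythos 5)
Your proposal matches the paper's proof essentially step for step: the inclusion $W_{(A,P)}\le\AHom(A,P)$ comes from the lemma preceding the theorem, the reverse inclusion is reduced to realising exactly the same three generator types (which the paper isolates as Lemma~\ref{lem:gens}, derived from Proposition~\ref{pro:ahom} and the wreath-product structure), and each generator is realised by a closed walk built from the free-point manoeuvres of Subsection~\ref{subsec:conn}, with non-stationarity supplying the free point. The ``delicate point'' you flag --- ruling out spurious label contributions from the auxiliary segments --- is handled in the paper by a Claim extending Proposition~\ref{pro:lab} to walks (the label of a walk is read off the single composite transformation $f=e_k\dots e_1$ via $b_{i\pi}f=a_i$ and $P_jf^{-1}=Q_{j\pi'}$), which vindicates your expectation that returning the free point to its origin forces the net label to be exactly the intended generator.
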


Bearing in mind the preceding lemma, the strategy for the proof of this theorem is first to identify the generators of $\AHom(A,P)$ (taking Proposition \ref{pro:ahom} into account),
and then (in the non-stationary case), for each of these generators, constructing a loop in $\mathcal{A}(\ol{D}_m,\ol{D}_r)$ based at $(A,P)$ whose group label is 
precisely the generator in question.

\begin{lem}\label{lem:gens}
Let $A$ be an $m$-element subset of $[1,n]$ and let $P$ be a partition of $[1,n]$ into $r$ classes. Let $\tau_{\alpha,\beta}$ denote the transposition of points $\alpha,\beta$.
Then $\AHom(A,P)$ is generated by the following elements:
\begin{itemize}
\item[(i)] for any two $1\leq i\neq i'\leq m$ such that $a_i,a_{i'}$ belong to the same $P$-class, the pairs 
$$
(\tau_{i,i'},\id_r);
$$
\item[(ii)] for any two $1\leq j\neq j'\leq r$ such that $|P_j|=|P_{j'}|=q\neq 0$, the pairs
$$
(\tau_{i_1,i'_1}\dots\tau_{i_q,i'_q},\tau_{j,j'}),
$$
where $A\cap P_j=\{a_{i_1}<\dots<a_{i_q}\}$ and $A\cap P_{j'}=\{a_{i'_1}<\dots<a_{i'_q}\}$;
\item[(iii)] for any two $1\leq j\neq j'\leq r$ such that $A\cap P_j=A\cap P_{j'}=\es$, the pairs
$$
(\id_m,\tau_{j,j'}).
$$
\end{itemize}
\end{lem}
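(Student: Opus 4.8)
The plan is to verify that the three families of pairs listed in (i), (ii), (iii) all lie in $\AHom(A,P)$ and together generate it, leaning heavily on the explicit description of $\AHom(A,P)$ supplied by Proposition \ref{pro:ahom}. Each claimed generator is first checked for membership: for type (i), the pair $(\tau_{i,i'},\id_r)$ fixes every $P$-class (so $\pi'=\id_r$ gives $\ol\pi=\id$ on $J_{(A,P)}$) and merely swaps two elements of $A$ lying in the \emph{same} class, hence trivially induces an auto-homeomorphism; for type (ii), the pair moves the block of $A$-points in $P_j$ onto the block in $P_{j'}$ (in the order-preserving way forced by $a_{i_1}<\dots<a_{i_q}$ and $a_{i'_1}<\dots<a_{i'_q}$) while $\tau_{j,j'}$ transposes the two equal-sized classes, so the compatibility condition $a_{i\pi}\in P_{j\pi'}$ is met and $\pi'=\ol\pi\oplus\id$; for type (iii), $\pi=\id_m$ fixes $A$ pointwise and $\tau_{j,j'}$ transposes two empty-intersection classes, which is exactly a permissible choice of $\pi''$ on $[1,r]\setminus J_{(A,P)}$. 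So membership in all three cases is immediate from part (iii) of Proposition \ref{pro:ahom}.

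The substantive half is generation. I would argue separately along the first and second projections. By Proposition \ref{pro:ahom}(i), the first projection $\Gamma_1$ is the direct product of wreath products $\prod_s(\mathbb{S}_{m_s}\wr\mathbb{S}_{\mu_s})$, which is the full group of permutations of $A$ preserving the partition $P$ induces on $A$. A standard generating set for such a partition-preserving group consists of the transpositions \emph{within} each block together with transpositions that swap two equal-sized blocks ``in order''; these are precisely the first components appearing in (i) and (ii). For the second projection, Proposition \ref{pro:ahom}(ii) identifies $\Gamma_2\cong\mathbb{S}_{\mu_1}\times\dots\times\mathbb{S}_{\mu_k}\times\mathbb{S}_\nu$, and adjacent transpositions generate each symmetric factor: the factors $\mathbb{S}_{\mu_s}$ (permuting non-empty classes of common size $m_s$) are reached by the $\tau_{j,j'}$ second components in (ii), while the factor $\mathbb{S}_\nu$ (permuting the $\nu$ empty-intersection classes) is reached by the $\tau_{j,j'}$ in (iii).

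To assemble these into a proof that the listed elements generate all of $\AHom(A,P)$, I would take an arbitrary $(\pi,\pi')\in\AHom(A,P)$ and reduce it using the structural constraint $\pi'=\ol\pi\oplus\pi''$ from Proposition \ref{pro:ahom}(iii). First, use type (iii) generators to cancel the $\pi''$-part (the permutation of empty classes), reducing to the case $\pi''=\id$. Next, use the type (ii) generators to realize the block-permutation $\ol\pi$: multiplying by a suitable product of type (ii) elements brings $\pi'$ down to $\id_r$, which forces the remaining first component to fix every $P$-class setwise, i.e.\ to lie in $\prod_s\mathbb{S}_{m_s}^{\mu_s}$ (the base group of the wreath products). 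Finally, the type (i) generators, being all within-block transpositions, generate exactly this base group, completing the reduction to the identity. The one point demanding care — and the likely main obstacle — is the bookkeeping in the middle step: when a type (ii) generator permutes blocks, the ``monotone'' pairing $a_{i_t}\mapsto a_{i'_t}$ is rigidly fixed, so after transporting blocks one may be left with an \emph{internal} permutation of each block that differs from the identity by a known element of $\mathbb{S}_{m_s}$; one must confirm that this residual internal discrepancy is itself expressible via type (i) transpositions, so that the within-block and between-block contributions decouple cleanly rather than interfering. This is exactly the familiar verification that $\mathbb{S}_{m_s}\wr\mathbb{S}_{\mu_s}$ is generated by base transpositions together with the ``order-preserving'' block swaps, and it should go through without difficulty once the indexing conventions from Proposition \ref{pro:lab} are kept straight.
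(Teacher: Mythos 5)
Your proposal is correct and follows essentially the same route as the paper: verify membership via Proposition \ref{pro:ahom}(iii), then split an arbitrary $(\pi,\pi')$ as $(\pi,\ol\pi\oplus\id)(\id_m,\id\oplus\pi'')$, handle the empty-class factor with type (iii) generators, and reduce the $\Gamma_1$-factor using the standard generation of $\mathbb{S}_{m_s}\wr\mathbb{S}_{\mu_s}$ by within-block transpositions (type (i)) and monotone block swaps (type (ii)). The paper compresses your explicit reduction into the observation that $\gamma\mapsto(\gamma,\ol\gamma\oplus\id)$ is an embedding of $\Gamma_1$ (citing \cite{RoBook} for the wreath-product generating set), but the substance is identical, including the bookkeeping point you flag about residual internal block permutations.
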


\begin{proof}
Firstly, it is clear that all the listed pairs indeed belong to $\AHom(A,P)$. Conversely, assume that $(\pi,\pi')\in\AHom(A,P)$. Then, by Proposition \ref{pro:ahom}(iii), for some 
permutation $\pi''$ of $N=[1,r]\setminus J_{(A,P)}$ we have
$$
(\pi,\pi') = (\pi,\ol\pi\oplus\pi'') = (\pi,\ol\pi\oplus\id_N)(\id_m,\id_{J_{(A,P)}}\oplus\pi'').
$$
It is immediately clear that the second factor on the right-hand side is generated by pairs of type (iii). As for the first factor, it is generated by pairs of the form
$$
(\gamma,\ol\gamma\oplus\id_N),
$$
where $\gamma$ runs through the generating set of $\Gamma_1$. This is so because the pairs of the form $(\pi,\ol\pi\oplus\id_N)$ form a subgroup of $\AHom(A,P)$ isomorphic to $\Gamma_1$, 
as the bar mapping is a group homomorphism of $\Gamma_1$ into $\Gamma_2$: $\ol{\pi_1\pi_2}=\ol{\pi_1}\;\ol{\pi_2}$ holds for all $\pi_1,\pi_2\in\Gamma_1$. However, we already know the
structure of $\Gamma_1$, from Proposition \ref{pro:ahom}(i): it is a direct product of wreath products of symmetric groups. The standard knowledge on generating sets of wreath products,
considered as semidirect products, see \cite{RoBook}, immediately implies the result of the lemma.
\end{proof}

We may now proceed by proving our main theorem.

\begin{proof}[Proof of Theorem \ref{thm:main}]
We begin with an observation that will greatly simplify the arguments in the remainder of the proof. Namely, the statement of Proposition \ref{pro:lab} extends to the labels of arbitrary 
walks, in the following sense. The next result is easily verified by repeated applications of this proposition.

\bigskip

\noindent\textit{Claim.} Let $(A,P)$ and $(B,Q)$ be two vertices in $\mathcal{A}(\ol{D}_m,\ol{D}_r)$ connected by a walk
$$
(A,P)\longrightarrow (A_1,P^{(1)})\longrightarrow \dots \longrightarrow (A_{k-1},P^{(k-1)})\longrightarrow (B,Q),
$$
where the edges correspond to idempotent transformations $e_1,\dots,e_k\in E(\T_n)$, with labels $(\pi_s,\pi'_s)$, $1\leq s\leq k$, respectively. Let $f=e_k\dots e_1\in\T_n$. Then
the label $(\pi,\pi') = (\pi_1,\pi'_1)\dots (\pi_k,\pi'_k)$ of this walk is (uniquely) determined by the conditions
$$
b_{i\pi}f = a_i \quad\text{and}\quad P_jf^{-1}=Q_{j\pi'}
$$
for all $1\leq i\leq m$ and $1\leq j\leq r$.

\bigskip

When $(B,Q)=(A,P)$, this applies to closed walks, too. So, our aim is to exhibit, for each generator of $\AHom(A,P)$ listed in Lemma \ref{lem:gens}, a closed walk based
at (a non-stationary pair) $(A,P)$ that corresponds to the considered generator in the sense of the previous claim. This will then complete the proof that $W_{(A,P)}=\AHom(A,P)$.
We opt rather to present the moves corresponding to edges of these walks in a descriptive, combinatorial manner than to write out the sequences of steps (and calculate the labels)
formally -- as this would obscure a great deal the essentially simple ideas behind these constructions. 

(1) \emph{Generators of the form $(\tau_{i,i'},\id_r)$ where $a_i,a_{i'}\in P_j$ for some $i,i',j$.} Here, we should present a sequence of steps (1),(2) -- as specified in 
Subsection \ref{subsec:conn} -- starting and ending with the pair $(A,P)$, such that the resulting transformation $f$ switches $a_i$ and $a_{i'}$ and leaves all the other elements
of $A$ intact. Along the way, we assume that $p\in[1,n]\setminus A$ is a free point for $(A,P)$ (such point exists as $(A,P)$ is assumed to be not stationary). To allow easier
tracking of the movement of points, let us call the initial point $a_i$ \emph{red}, and $a_{i'}$ \emph{blue}. Since $p$ is free, we can remove it from its $P$-class, say $P_{j'}$ 
and add it to $P_j$ (unless it is already there, in case $j'=j$) -- this is a move of type (2). Now we can use $p$ to perform the switch within $P_j$ by a sequence of moves of
type (1): move the red point from $a_i$ to $p$, then the blue point from $a_{i'}$ to $a_i$, and finally the red point from $p$ to $a_{i'}$. At the end, if $j'\neq j$, since $p$
is free at this moment, we may apply (2) to move it back to $P_{j'}$.

(2) \emph{Generators of the form $(\tau_{i_1,i'_1}\dots\tau_{i_q,i'_q},\tau_{j,j'})$ where the classes $P_j,P_{j'}$ are of the same cardinality, and the elements of $A$ contained 
in them are switched in a monotone manner.} In other words, we should find the way to switch the classes $P_j$ and $P_{j'}$ and their elements $a_{i_t},a_{i'_t}$, $1\leq t\leq q$,
respectively. Again, as in the previous case, let $p$ be a free point with respect to $(A,P)$, and, for tracking purposes, let us call the class $P_j$, as well as its elements 
belonging to $A$ \emph{red}, and the class $P_{j'}$ and its elements from $A$ \emph{blue}. We start by adding $p$ to $P_j$ (a move of type (2)). Then, we proceed by moving the
first red point (currently at $a_{i_1}$) to $p$. In this moment, $a_{i_1}$ becomes a free point in the red class, so we may move it (by (2)) to the blue class. After this, we
move the first blue point from $a_{i'}$ to $a_i$. Now, $a_{i'}$ becomes a free point in the blue class, so we move it to the red class, and, subsequently, move the red point
currently at $p$ to $a_{i'}$. The cumulative effect of this part of the process is that $a_i$ became a blue point belonging to the blue class, and $a_{i'}$ a red point belonging
to the red class (and $p$ remained in the red class, being a free point again). But it is now clear that in the same fashion this process can be repeated for the (red-blue) pairs
$(a_{i_2},a_{i'_2}),\dots,(a_{i_q},a_{i'_q})$. At the very end, as $q>0$, all points of the involved classes $P_j,P_{j'}$ not belonging to $A$ are free, so they might be freely
exchanged between the red and blue classes; also, $p$ might be returned to the $P$-class it initially belonged to (as it is free at the end of the described process). Thus we 
again arrive at the pair $(A,P)$ with the only difference that now $P_j$ is blue and $P_{j'}$ red, and the pairs of their $A$-points $a_{i_t},a_{i'_t}$, $1\leq t\leq q$,
exchanged colours. This accounts for the required label.

(3) \emph{Generators of the form $(\id_m,\tau_{j,j'})$ with $A\cap P_j=A\cap P_{j'}=\es$.} Effectively, we need to describe how to switch two $P$-classes (namely $P_j$ and $P_{j'}$) 
not containing any elements  of $A$. If at least one of the $P_j,P_{j'}$ is not a singleton, then this is straightforward: only moves of type (2) will suffice, without involvement 
of any other points from $[1,n]$. Otherwise, if both $P_j,P_{j'}$ are singletons, then we may use the existence of a free point $p\in[1,n]\setminus A$. Namely, we may take it out 
of its current $P$-class and ``lend'' it to $P_j$. This creates a situation from the former case, whence we can now use steps of type (2) to construct a loop which corresponds 
to the switch between $P_j\cup\{p\}$ and $P_{j'}$. At the end, we may take back $p$ to its initial $P$-class, thus completing the loop with the required label.

Our theorem is now proved.
\end{proof}

It remains to comment on the choice of coset representatives $(g_k,h_k)$ in the context of the word problem for $\ig{\cE_{\mathcal{T}_n}}$ and, more generally, in the course
of computing the map $\theta$. As explained in Subsection \ref{subsec:theta}, for these it suffices to choose the label of any walk $(A_k,P_{k+1})\leadsto (B_k,Q_{k+1})$ 
whenever such a walk exists. If it does, we have already argued in this paper that then we must have $(A_k,P_{k+1})\sim (B_k,Q_{k+1})$ and so the label associated to 
any homeomorphism will do. Such a homeomorphism is very easy to compute: for the two considered pairs, one needs to match up partition classes of $P_{k+1}$ and $Q_{k+1}$ 
containing the same number of elements from $A_k$ and $B_k$, respectively, and then to (arbitrarily) choose bijections between the elements of these sets from matching classes.



\begin{thebibliography}{99}
\frenchspacing

\bibitem{ABMS}
J. Ara\'ujo, W. Bentz, J. D. Mitchell, C. Schneider,
The rank of the semigroup of transformations stabilising a partition of a finite set, 
\emph{Math. Proc. Cambridge Philos. Soc.} \textbf{159} (2015), 339--353.

\bibitem{BMM}
M. Brittenham, S. W. Margolis, J. Meakin,
Subgroups of free idempotent generated semigroups need not be free,
\emph{J. Algebra} \textbf{321} (2009), 3026--3042.

\bibitem{YDG1}
Y. Dandan, I. Dolinka, V. Gould,
Free idempotent generated semigroups and endomorphism monoids of free $G$-acts,
\emph{J. Algebra} \textbf{429} (2015), 133--176.

\bibitem{YDG2}
Y. Dandan, I. Dolinka, V. Gould,
A group-theoretical interpretation of the word problem for free idempotent generated semigroups,
\emph{Adv. Math.} \textbf{345} (2019), 998--1041.

\bibitem{YG}
Y. Dandan, V. Gould,
Free idempotent generated semigroups over bands and biordered sets with trivial products,
\emph{Internat. J. Algebra Comput.} \textbf{26} (2016), 473--507.

\bibitem{Do1}
I. Dolinka,
A note on free idempotent generated semigroups over the full monoid of partial transformations,
\emph{Comm. Algebra} \textbf{41} (2013), 565--573.

\bibitem{Do2}
I. Dolinka, 
Free idempotent generated semigroups: The word problem and structure via gain graphs,
\emph{Israel J. Math.} \textbf{245} (2021), 347--387.

\bibitem{DG}
I. Dolinka, R. D. Gray,
Maximal subgroups of free idempotent generated semigroups over the full linear monoid,
\emph{Trans. Amer. Math. Soc.} \textbf{366} (2014), 419--455.

\bibitem{DGR}
I. Dolinka, R. D. Gray, N. Ru\v skuc,
On regularity and the word problem for free idempotent generated semigroups,
\emph{Proc. London Math. Soc. (3)} \textbf{114} (2017), 401--432.

\bibitem{DR}
I. Dolinka, N. Ru\v skuc,
Every group is a maximal subgroup of the free idempotent generated semigroup over a band,
\emph{Internat. J. Algebra Comput.} \textbf{23} (2013), 573--581.

\bibitem{Ea1}
D.~Easdown,
Biordered sets of eventually regular semigroups,
\emph{Proc. London Math. Soc. (3)} \textbf{49} (1984), 483--503.

\bibitem{Ea2}
D.~Easdown,
Biordered sets come from semigroups,
\emph{J. Algebra} \textbf{96} (1985), 581--591.

\bibitem{FG}
D.~G.~Fitz-Gerald,
On inverses of products of idempotents in regular semigroups,
\emph{J. Austral. Math. Soc.} \textbf{13} (1972), 335--337.

\bibitem{GAP}
The GAP Group. 
\emph{GAP -- Groups, Algorithms, and Programming.}

\bibitem{GY}
V. Gould, D. Yang,
Every group is a maximal subgroup of a naturally occurring free idempotents generated semigroup,
\emph{Semigroup Forum} \textbf{89} (2014), 125--134.

\bibitem{GR1}
R. Gray, N. Ru\v skuc,
On maximal subgroups of free idempotent generated semigroups,
\emph{Israel J. Math.} \textbf{189} (2012), 147--176.

\bibitem{GR2}
R. Gray, N. Ru\v skuc,
Maximal subgroups of free idempotent generated semigroups over the full transformation monoid,
\emph{Proc. London Math. Soc. (3)} \textbf{104} (2012), 997--1018.

\bibitem{Ho}
J. M. Howie,
The subsemigroup generated by the idempotents of a full transformation semigroup,
\emph{J. London Math. Soc.} \textbf{41} (1966), 707--716.

\bibitem{HoBook}
J.~M.~Howie,
\emph{Fundamentals of Semigroup Theory},
London Mathematical Society Monographs, New Series, Vol. 12, The Clarendon Press, Oxford University Press, New York, 1995.

\bibitem{McE}
B. McElwee,
Subgroups of the free semigroup on a biordered set in which principal ideals are singletons,
\emph{Comm. Algebra} \textbf{30} (2002), 5513--5519.

\bibitem{Na}
K.~S.~S.~Nambooripad,
Structure of regular semigroups. I.
\emph{Mem. Amer. Math. Soc.} \textbf{22} (1979), no. 224, vii+119 pp.

\bibitem{NP}
K.~S.~S. Nambooripad, F.~Pastijn, 
Subgroups of free idempotent generated regular semigroups, 
\emph{Semigroup Forum} \textbf{21} (1980), 1--7.

\bibitem{Pa}
F. Pastijn, 
The biorder on the partial groupoid of idempotents of a semigroup, 
\emph{J. Algebra} \textbf{65} (1980), 147--187.

\bibitem{RoBook}
J. J. Rotman, 
\emph{An Introduction to the Theory of Groups},
Graduate Texts in Mathematics, Vol. 148, Springer-Verlag, New York, 1995.

\end{thebibliography}
\end{document}